\documentclass[11pt,a4paper]{article}
 \usepackage{euscript}
\usepackage{amsmath}
\usepackage{graphicx}
\usepackage{amsthm}
\usepackage{amssymb}
\usepackage{epsfig}
\usepackage{url}
\usepackage{colonequals}
\usepackage{amsmath,amscd}
\theoremstyle{theorem}
\newtheorem{theorem}{Theorem}[section]
\newtheorem{corollary}[theorem]{Corollary}

\newtheorem{lemma}[theorem]{Lemma}
\newtheorem{proposition}[theorem]{Proposition}

\newtheorem{definition}{Definition}[section]

\newtheorem{example}{Example}[section]
\newtheorem{remark}{Remark}[section]
\numberwithin{equation}{section}
 
\newcommand{\s}{\sigma}

\title{Amitsur--Small Extensions and a Skew Amitsur--Small Theorem} 
\author{Masood Aryapoor\\
	\tiny{\textit{Division of Mathematics and Physics}}\\
	\tiny{\textit{M\"{a}lardalen  University}}\\
	\tiny{\textit{Hamngatan 15, 632 17, Eskilstuna, 
			Sweden
	}}
}
 \date{}

\begin{document}
 \maketitle
 
 \begin{abstract}
 	
 	We introduce the notion of Amitsur--Small extensions to generalize a key lemma underlying the Amitsur--Small Theorem to the skew setting. Building on this framework, we establish a skew version of the Amitsur--Small Theorem.
 	
 \end{abstract}
\section{Introduction} 
	 In \cite{AmitsurSmall}, Amitsur and Small established the following result:
	 \begin{theorem}[Amitsur--Small Theorem]\label{thm: Weak Nullstellensatz over division rings}\label{thm: amitsur small}
	 	If $D$ is a division ring, then simple $D[X_1,\dots,X_n]$-modules are finite-dimensional as  vector spaces over $D$.
	 \end{theorem}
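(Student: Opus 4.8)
The plan is to reduce the statement, via Schur's lemma and the central polynomial subring, to an inductive assertion about ring extensions of $D$, and then to prove that by induction on the number of variables. Fix a simple $R$-module $M$ with $R=D[X_1,\dots,X_n]$ and write $k=Z(D)$. Since $M$ is cyclic, $M\cong R/I$ for a maximal left ideal $I$; by Schur's lemma $\Delta:=\mathrm{End}_R(M)$ is a division ring, $M$ is a $\Delta$-vector space, and $R/\mathrm{Ann}_R(M)$ acts on it as a dense ring of $\Delta$-linear operators. I would then use two observations: a nonzero element of $D$ acts invertibly on $M$, so $D$ embeds into $R/\mathrm{Ann}_R(M)$; and, since each $X_i$ is central in $R$, the polynomial subring $k[X_1,\dots,X_n]$ maps into the center of the division ring $\Delta$, whence $\mathfrak p:=\mathrm{Ann}_{k[X_1,\dots,X_n]}(M)$ is a prime ideal and every element of $k[X_1,\dots,X_n]$ outside $\mathfrak p$ already acts invertibly on $M$. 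Putting $\kappa:=\mathrm{Frac}\bigl(k[X_1,\dots,X_n]/\mathfrak p\bigr)$, it follows that $M$ is a module over $D\otimes_k\kappa$---indeed a simple faithful module over the image of $D\otimes_k\kappa$ in $\mathrm{End}(M)$.

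One case is immediate from this set-up: if $\mathfrak p$ is a maximal ideal, then $k':=k[X_1,\dots,X_n]/\mathfrak p=\kappa$ is a finite field extension of $k$ by Zariski's lemma, the action $R\to\mathrm{End}(M)$ factors through $R/\mathfrak pR=D\otimes_k k'$, which is free of rank $[k':k]$ as a left $D$-module, and the cyclicity of $M$ over $D\otimes_k k'$ forces $\dim_D M\le[k':k]<\infty$. So the whole difficulty lies in the case where $\mathfrak p$ fails to be maximal (which cannot occur when $D$ is commutative---it is here that the theorem genuinely goes beyond the classical Nullstellensatz).

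I would attack that case by induction on $n$, carrying in the inductive hypothesis not the bare conclusion ``simple $S$-modules are finite over $D$'' for $S=D[X_1,\dots,X_{n-1}]$ but a strengthening of it---the statement that $D\subseteq S$ is an \emph{Amitsur--Small extension}---so that the hypothesis survives the passage from $S$ to $S[X_n]$. The base case $n=1$ is elementary: $D[X]$ is a principal left ideal domain, $I=D[X]f$ for an irreducible $f$, and the left division algorithm (valid because the leading coefficient of $f$ is a unit of $D$) exhibits $1,X,\dots,X^{\deg f-1}$ as a $D$-basis of $M$, so $\dim_D M=\deg f<\infty$. For the inductive step, the first point is that $P:=\mathrm{Ann}_S(M)$ is a \emph{prime} ideal of $S$: if $B$ is a two-sided ideal of $S$ with $BM\ne 0$, then $BM$ is stable under $S$ and under $X_n$ (the latter since $X_n$ is central over $S$), hence is an $R$-submodule, hence all of $M$, which forces primeness. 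One then distinguishes how $X_n$ acts over $S/P$. If it is ``algebraic'' over the image of $S$---satisfies a monic identity---then $M$ is finitely generated over $S$, and a prime filtration of $M$ together with the inductive Amitsur--Small hypothesis for $S$ finishes the job. If $X_n$ acts ``transcendentally,'' then the nonzero elements of $k[X_n]$, being central, act invertibly, so one may localize at them and thereby reduce the number of variables, at the cost of enlarging $D$ to a central localization; the Amitsur--Small extension formalism is exactly what keeps finiteness over $D$ under control through this step. The lemma that drives the whole induction is then that ``$D\subseteq S$ is an Amitsur--Small extension'' is inherited by $S[X]$; applying it $n$ times to the trivial extension $D\subseteq D$ gives the theorem.

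The step I expect to be the main obstacle is the transcendental alternative above: it is a non-commutative surrogate for Zariski's lemma and Noether normalization, and the familiar commutative machinery---integral ring extensions, finiteness of the integral closure---is unavailable because $D$, and hence $S/P$, need be neither commutative nor finite-dimensional over its center. Everything else (Schur's lemma, the density theorem, the $n=1$ division algorithm, and the passage to $D\otimes_k\kappa$) is routine bookkeeping; the weight of the proof lies in verifying, within the Amitsur--Small extension framework, that the localize-then-specialize procedure only ever produces extensions that remain finite-dimensional over $D$.
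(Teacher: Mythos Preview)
The paper does not give a standalone proof of this theorem (it is quoted from \cite{AmitsurSmall}), but its machinery specializes to one, and that proof is not the one you outline. The paper views $D[X_1,\dots,X_n]$ as a polynomial ring over the PID $R=D[X_1]$, shows this is an Amitsur--Small extension of $R$ (Theorem~\ref{thm: A-Inv}, Proposition~\ref{prop: lemmaC}), and deduces via Proposition~\ref{prop: ASextensions} that every maximal left ideal $L$ meets $D[X_1]$---hence by symmetry each $D[X_k]$---nontrivially; finite $D$-dimension of $R/L$ then follows at once from the one-variable division algorithm. In particular the \emph{base} of the Amitsur--Small extension is $D[X_1]$, not $D$: Definition~\ref{def: amitsur-small-extensions} demands a nonzero element of the base that is not left invertible, so no extension of a division ring is Amitsur--Small in the paper's sense, and your inductive hypothesis ``$D\subseteq S$ is an Amitsur--Small extension'' is, as stated, vacuously false. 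Whatever strengthened property you intend to carry through the induction, you have not said what it is.

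Beyond terminology, the transcendental branch has a real gap that you flag but do not close. Localizing at $k[X_n]\setminus\{0\}$ replaces $D$ by $D'=D\otimes_k k(X_n)$; this need not be a division ring when $[D:k]=\infty$ (for instance, if $a\in D$ is transcendental over $k$ then $X_n-a$ is a nonzero non-unit in $D'$), and even when it is, induction yields only $\dim_{D'}M<\infty$, which says nothing about $\dim_D M$ since $\dim_D D'=\infty$. You invoke ``the Amitsur--Small extension formalism'' to descend back to $D$, but supply no mechanism, and this descent is exactly the content of the theorem. The original argument sidesteps the issue entirely: it never localizes, but shows directly (Lemma~C, the paper's Proposition~\ref{prop: ASextensions}) that $L$ already contains a nonzero polynomial in each single variable. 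Your algebraic branch is likewise incomplete: $M$ finitely generated over $S=D[X_1,\dots,X_{n-1}]$ does not by itself yield a finite composition series of simple $S$-modules, and ``prime filtration'' has no evident meaning over the noncommutative ring $S$. The Schur/density set-up and the case where $\mathfrak p$ is maximal are correct but are the easy part.
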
  
	 Here, the notation $D[X_1,\dots,X_n]$ denotes for the polynomial ring over \(D\) in \(n\) central indeterminates. Their proof relies on three lemmas, one of which states:
	 \begin{lemma}[Lemma C in \cite{AmitsurSmall}]\label{lem: lemmaC}
	 	If \(L\) is a maximal left ideal of \(D[X_1,\dots,X_n]\), then \(L\cap D[X_k]\neq 0 \) for all \(k\). 
	 \end{lemma}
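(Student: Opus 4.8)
The plan is to reformulate the claim in terms of the simple left $R$-module $M:=R/L$ (with $R:=D[X_1,\dots,X_n]$) and its canonical generator $u:=1+L$. Fixing $k$, one has $L\cap D[X_k]=\operatorname{Ann}_{D[X_k]}(u)$, so it suffices to produce a nonzero element of $D[X_k]$ killing $u$; equivalently, to show that $X_k$ acts ``torsion-wise'' on $M$ relative to the coefficient ring $D[X_k]$. Note that $D[X_k]$ is a domain which is left and right Noetherian (Hilbert's basis theorem, $X_k$ being central), hence a two-sided Ore domain; write $\Delta:=\operatorname{Frac}(D[X_k])$ for its division ring of fractions.

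The first step is a torsion dichotomy. Let $\tau\subseteq M$ be the $D[X_k]$-torsion submodule. It is an $R$-submodule: closure under addition follows from the left Ore condition on $D[X_k]$ (common left multiples); closure under left multiplication by $X_j$ ($j\ne k$) holds because $X_j$ is central and hence commutes with $D[X_k]$; closure under $X_k$ is clear; and closure under left multiplication by $d\in D$ follows from the fact that conjugation $g\mapsto d\,g\,d^{-1}$ maps $D[X_k]\setminus\{0\}$ into itself ($X_k$ being central). Since $M$ is simple, either $\tau=M$ --- in which case $u\in\tau$, giving $\operatorname{Ann}_{D[X_k]}(u)\ne 0$ and we are done --- or $\tau=0$, i.e. $M$ is $D[X_k]$-torsion-free, which we assume henceforth.

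In the torsion-free case, $M$ embeds as an $R$-module in $\widetilde M:=\Delta\otimes_{D[X_k]}M$, where $R=D[X_k][X_j:j\ne k]$ acts through the (injective) ring map into $\widetilde R:=\Delta[X_j:j\ne k]$, the polynomial ring over the division ring $\Delta$ in the $n-1$ central indeterminates $X_j$, $j\ne k$. One checks that $\widetilde M$ is generated over $\widetilde R$ by $1\otimes u$ and, more importantly, is \emph{simple} over $\widetilde R$: a nonzero $\widetilde R$-submodule $N$ is a $\Delta$-subspace, hence $D[X_k]$-torsion-free, so --- since $\widetilde M/M$ is $D[X_k]$-torsion --- $N$ meets $M$ nontrivially; simplicity of $M$ then forces $M\subseteq N$, whence $N=\widetilde R\cdot M=\widetilde M$. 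Now invoke the Amitsur--Small theorem (Theorem~\ref{thm: amitsur small}) in $n-1$ indeterminates over the division ring $\Delta$ --- available by induction on $n$, the case $n=1$ being immediate since $D[X_1]$ is not a division ring: the simple $\widetilde R$-module $\widetilde M$ is finite-dimensional over $\Delta$, say $\dim_\Delta\widetilde M=s<\infty$. Equivalently, the torsion-free $D[X_k]$-module $M$ has finite rank $s$.

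The last step, which I expect to be the main obstacle, is to extract a contradiction from this. If one could show that $M$ is moreover \emph{finitely generated} over $D[X_k]$, then, being finitely generated and torsion-free over the left principal ideal domain $D[X_k]$, it would be free, $M\cong D[X_k]^{\,s}$ with $s\ge 1$; but then $X_kM$ would be a proper nonzero $R$-submodule of $M$ ($X_k$ is central in $R$ and is not a unit of $D[X_k]$), contradicting simplicity. The difficulty is exactly this finite generation: each $X_j$ ($j\ne k$) satisfies a polynomial relation over $\Delta$ on $\widetilde M$, and clearing denominators produces nonzero elements of $\operatorname{Ann}_R(M)\subseteq L$ lying in $D[X_k][X_j]$, but their leading coefficients (in $X_j$) lie in $D[X_k]$ and are in general not units --- and cannot be taken central unless $D$ is finite-dimensional over its centre --- so one cannot freely reduce monomial degrees in the $X_j$ to cut $M$ down to a finite $D[X_k]$-generating set. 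This is precisely where the non-commutativity of $D$ bites, and where the remaining auxiliary input underlying the Amitsur--Small theorem (its companion lemmas on how such relations interact with the $D$-coefficient structure) must be brought in, to upgrade ``finite rank'' to ``finitely generated''. (An alternative route --- reducing, via the Noetherian property, to a countable $D$ and then running a cardinality argument à la Amitsur --- meets a parallel difficulty in making the reduction respect the non-commutative coefficients.)
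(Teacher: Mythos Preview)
Your proposal is honest about its own incompleteness, and the gap you flag is genuine, not cosmetic. The passage from ``$M$ has finite rank $s$ over $D[X_k]$'' to ``$M$ is finitely generated over $D[X_k]$'' cannot be closed within the framework you have set up: clearing denominators in the $\Delta$-relations satisfied by the $X_j$ on $\widetilde M$ yields elements of $L\cap D[X_k][X_j]$ whose $X_j$-leading coefficients are arbitrary nonzero elements of $D[X_k]$, and in the absence of commutativity (or centrality) there is no way to divide by them, conjugate them away, or otherwise normalise them to units. So the reduction of monomial degrees in the $X_j$ that you need simply does not go through. Your induction scheme (Amitsur--Small in $n-1$ variables over $\Delta$ $\Rightarrow$ Lemma~C in $n$ variables $\Rightarrow$ Amitsur--Small in $n$ variables) is logically sound as a strategy, but the first arrow is exactly where the hard content of the companion Lemmas~A and~B of \cite{AmitsurSmall} lives, and you have not supplied a substitute.

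The paper's route (the proof of Proposition~\ref{prop: ASextensions}, explicitly modelled on the original Lemma~C argument) is entirely different and avoids your obstruction. There is no induction on $n$, no Ore localisation, and no appeal to the Amitsur--Small theorem itself. Instead one works directly with the family of ``leading-coefficient ideals'' $I_{\mathbf i}\subseteq D[X_k]$ attached to $L$, and uses Lemmas~A and~B of \cite{AmitsurSmall} to produce a single nonunit $r_0\in D[X_k]$ which is central in $S$ (so $r_0S=Sr_0$) and interacts well with every $I_{\mathbf i}$ in the sense that $(I_{\mathbf i}:r_0)\subseteq(r_0I_{\mathbf i}:r_0)$. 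If $r_0\notin L$, maximality gives $Qr_0-1\in L$ for some $Q$; choosing $Q$ of minimal leading exponent and using the displayed colon-ideal condition to peel off the leading term forces $Q\in D[X_k]$, whence $Qr_0-1$ is the sought nonzero element of $L\cap D[X_k]$. The point is that the ``special element'' $r_0$ --- whose existence is the content of Lemmas~A and~B --- replaces the finite-generation step you are missing; your approach tries to postpone that input to the end, where it becomes intractable.
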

	 Our primary goal is to extend this lemma to the skew setting and thereby obtain a skew version of the Amitsur--Small Theorem. In doing so, we introduce a special class of ring extensions, which we term  \emph{Amitsur--Small extensions}   (see Definition \ref{def: amitsur-small-extensions}). Our definition of  Amitsur--Small extensions depends on the notion of \emph{skew polynomial rings in several variables}, which serves as a suitable generalization of polynomial rings for our purposes (see Definition \ref{def: amitsur-small-extensions}). It is worth noting that for a maximal left ideal \(L\) in  \(D[X_1,\dots,X_n]\), the left ideal \(L\cap D[X_k]\) need not be  maximal in  \(D[X_k]\).   A concrete example illustrating this phenomenon was recently constructed by  Chapman and Paran in \cite{chapman2025amitsur}, where  they introduced and studied the concept of an Amitsur--Small ring.

	 The paper is organized as follows. Section \ref{sec: skew-several} introduces the notion of skew polynomial rings in several variables and explores their relationship to iterated skew polynomial rings and \(\sigma\)-PBW extensions.   In Section \ref{sec: amitsur-small}, we define the concept of an Amitsur--Small extension and prove that if \(R\subset S\) is an Amitsur--Small extension, then for any maximal left ideal \(L\) of \(S\), we have \(L\cap R\neq 0\) (see Proposition \ref{prop: ASextensions}).  The final section presents some examples of Amitsur--Small extensions, and in particular, establishes the following skew version of the Amitsur--Small Theorem (see Theorem \ref{thm: skew-AS}):
	 \begin{theorem}[Skew Amitsur--Small Theorem]
	 	Let $D$ be a division ring. Let $\s:D\to D$ be an automorphism and $\delta:D\to D$ be  a $\s$-derivation such that \(\sigma\delta = \delta\sigma\).  Consider the ring \(D[x_1,\dots,x_n;\sigma,\delta]\), generated over \(D\) by \(x_1,\dots,x_n\), subject to the relations \(x_ix_j=x_jx_i\), \(x_ia=\s(a)x_i+\delta(a)\) for all \(i,j\) and \(a\in D\). If the center of the skew polynomial ring $D[x;\s,\delta]$ contains a nonconstant polynomial, then any simple \(D[x_1,\dots,x_n;\sigma,\delta]\)-module is finite-dimensional as  a vector space over $D$.
	 \end{theorem}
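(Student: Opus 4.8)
The plan is to run Amitsur and Small's original argument with Proposition~\ref{prop: ASextensions} taking the place of Lemma~\ref{lem: lemmaC}. Write $R = D[x_1,\dots,x_n;\sigma,\delta]$, and for each $k$ let $R_k \subseteq R$ be the subring generated over $D$ by $x_k$ alone; the defining relations (together with the PBW-type basis of Section~\ref{sec: skew-several}) identify $R_k$ with the skew polynomial ring $D[x;\sigma,\delta]$, and since $\sigma$ is an automorphism, $R_k$ is a left principal ideal domain via the left division algorithm. A simple $R$-module $M$ is cyclic, hence $M \cong R/L$ for a maximal left ideal $L$ of $R$, so it suffices to bound $\dim_D(R/L)$.

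First I would show that, for each $k$, the inclusion $R_k \hookrightarrow R$ is an Amitsur--Small extension in the sense of Definition~\ref{def: amitsur-small-extensions}. This is the step that consumes the hypothesis on the center of $D[x;\sigma,\delta]$: a nonconstant central polynomial of $D[x;\sigma,\delta]$ is exactly what makes $R$ behave, relative to $R_k$, the way the commutative polynomial ring behaves relative to $D[X_k]$, and it is what is needed to push the skew analogue of the Amitsur--Small counting argument through. The commutation hypothesis $\sigma\delta = \delta\sigma$ enters here as well, since it lets one present $R$ as an iterated skew polynomial ring $D[x_k;\sigma,\delta][\,\cdots\,]$ over $R_k$. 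Granting this, Proposition~\ref{prop: ASextensions} gives $L \cap R_k \neq 0$ for every $k$; as $R_k$ is a left PID, $L \cap R_k = R_k f_k$ with $f_k \neq 0$, and $f_k$ cannot be a unit (else $1 \in L$), so $d_k := \deg_{x_k} f_k \geq 1$.

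The remaining step is the finite-dimensionality deduction, carried out with the basis of ordered monomials $x^{\mathbf b} := x_1^{b_1}\cdots x_n^{b_n}$, which form a left $D$-basis of $R$. Let $V \subseteq R/L$ be the $D$-span of the images of those $x^{\mathbf b}$ with $0 \le b_k < d_k$ for all $k$; then $\dim_D V \le \prod_k d_k$, and it is enough to prove $\overline{x^{\mathbf b}} \in V$ for every $\mathbf b$, by induction on $|\mathbf b| = \sum_k b_k$. If some $b_j \ge d_j$, replace $f_j$ by its monic associate, so that $x_j^{d_j} \equiv h_j \pmod{L \cap R_j}$ with $h_j \in R_j$ and $\deg_{x_j} h_j < d_j$; multiplying this congruence on the left by $\big(\prod_{i \neq j} x_i^{b_i}\big)x_j^{b_j - d_j} \in R$ — valid because $L$ is a left ideal — yields $\overline{x^{\mathbf b}} = \overline{\big(\prod_{i \neq j} x_i^{b_i}\big)x_j^{b_j - d_j}h_j}$. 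Expanding and then using the commutation identities $x_i^m c = \sigma^m(c)x_i^m + (\text{lower order in } x_i)$, for $c \in D$, to move all $D$-coefficients back to the left, one rewrites the right-hand side as a $D$-combination of $\overline{x^{\mathbf c}}$ with $c_j < b_j$ and $c_i \le b_i$ for $i \neq j$, so $|\mathbf c| < |\mathbf b|$ and the induction closes. Hence $R/L = V$ and $\dim_D(R/L) \le \prod_k d_k < \infty$.

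I expect the real obstacle to be the middle step — verifying that $R_k \hookrightarrow R$ is an Amitsur--Small extension — because that is precisely where the central-polynomial hypothesis must be converted into the combinatorial input behind Lemma~\ref{lem: lemmaC}, and where any genuine new content over the commutative case lies. By contrast, the last step is essentially bookkeeping; the only care required is non-commutative in nature, namely multiplying the reducing relation on the correct (left) side and tracking how degrees change as coefficients are pushed past powers of the $x_i$.
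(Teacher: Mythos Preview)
Your outline is exactly the paper's strategy: view $S=D[x_1,\dots,x_n;\sigma,\delta]$ as a skew polynomial ring over $R_k=D[x_k;\sigma,\delta]$, prove that $R_k\hookrightarrow S$ is an Amitsur--Small extension, apply Proposition~\ref{prop: ASextensions} to get $L\cap R_k\neq 0$ for every $k$, and then read off finite-dimensionality. Your final monomial-reduction step is a correct expansion of what the paper abbreviates as ``interpreting this result in the context of modules.''

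The step you flag as the ``real obstacle'' is the only place your proposal is not yet a proof, and your description of it (``push the skew analogue of the Amitsur--Small counting argument through'') is too vague to count as an argument. In the paper this is handled not by redoing a counting argument from scratch but by invoking two results already in place. First, Proposition~\ref{prop: center-maximal-ideals} converts the hypothesis ``the center of $D[x;\sigma,\delta]$ contains a nonconstant polynomial'' into the statement that $R_k$ has infinitely many distinct maximal ideals, each generated by a \emph{central} element of $R_k$ (via Cauchon's description of the ideal structure). Second, Theorem~\ref{thm: A-Inv} says that a (two-sided) PID $R$ sitting inside a skew polynomial ring $S=R[\,\cdots\,;<]$ with automorphic structural endomorphisms is an Amitsur--Small extension as soon as $R$ has infinitely many maximal ideals $Rr=rR$ with $rS\subseteq Sr$. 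Since the generators $r$ supplied by Proposition~\ref{prop: center-maximal-ideals} are central in $R_k$ (indeed in $S$), the condition $rS\subseteq Sr$ is automatic, and Theorem~\ref{thm: A-Inv} applies. Note that this requires $R_k$ to be a two-sided PID, not just a left PID; since $\sigma$ is an automorphism, $D[x;\sigma,\delta]$ is indeed a PID on both sides.
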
  
	 The final section also presents a generalization of Lemma \ref{lem: lemmaC} (see Proposition \ref{prop: lemmaC}):
	 	\begin{proposition}
	 	Let $R$ be a principal (left and right) ideal ring with infinitely many distinct maximal ideals. Then for every maximal left ideal $L$ of the polynomial ring $R[x_1,\dots,x_n]$ over \(R\) in central indeterminates, we have $L\cap R\neq 0$. 
	 \end{proposition}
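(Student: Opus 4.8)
The plan is to recognise $R\subseteq R[x_1,\dots,x_n]$ as an Amitsur--Small extension in the sense of Definition \ref{def: amitsur-small-extensions} and then invoke Proposition \ref{prop: ASextensions}; concretely this comes down to an induction on $n$ modelled on the proof of Lemma \ref{lem: lemmaC} and anchored by the Amitsur--Small Theorem (Theorem \ref{thm: amitsur small}). I would first reduce to the prime case. A principal ideal ring is Noetherian; if its nilradical $N$ is nonzero, then $N[x_1,\dots,x_n]$ is a nilpotent two-sided ideal of $R[x_1,\dots,x_n]$, hence annihilates the simple module $R[x_1,\dots,x_n]/L$ and lies in $L$, so $L\cap R\supseteq N\neq 0$ and we are done. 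Thus $R$ may be assumed semiprime, hence a finite direct product of prime principal ideal rings; the central idempotents of $R$ act on $R[x_1,\dots,x_n]/L$, forcing all but one direct factor into $L$, so unless $R$ is itself prime we again get $L\cap R\neq 0$. Hence we may assume $R$ prime (still with infinitely many maximal ideals), and the base case $n=0$ is immediate since a maximal left ideal of the non-simple ring $R$ cannot be zero.

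Assume now, toward a contradiction, that $L\cap R=0$, and put $M:=R[x_1,\dots,x_n]/L$, a simple module into which $R$ embeds. Let $\mathcal C$ be the set of regular elements of the prime Noetherian ring $R$; by Goldie's theorem $\mathcal C$ is an Ore set and $Q:=R\mathcal C^{-1}$ is a simple Artinian ring, and since the elements of $\mathcal C$ commute with the central $x_i$ the set $\mathcal C$ is Ore in $R[x_1,\dots,x_n]$ too. The $\mathcal C$-torsion submodule of $M$ is an $R[x_1,\dots,x_n]$-submodule missing $1+L$, so it is $0$; therefore $M$ embeds in $\overline M:=M\otimes_R Q=Q[x_1,\dots,x_n]/\overline L$, a simple module over $Q[x_1,\dots,x_n]$. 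By the Amitsur--Small Theorem applied over $Q$ (which follows from Theorem \ref{thm: amitsur small} by Morita equivalence, $Q$ being simple Artinian), $\overline M$ has finite length over $Q$; consequently $\mathrm{Ann}_{Q[x_1,\dots,x_n]}(\overline M)$ is a maximal two-sided ideal, and clearing denominators gives, for each $k$, a nonzero element of $L$ of the form $c\,x_k^{d_k}+(\text{lower powers of }x_k,\text{ with coefficients in }R)$, where $c\in\mathcal C$ is a common denominator and $d_k\ge 1$ (if some $d_k=0$ we already have $0\neq c\in L\cap R$). Equivalently, writing $A:=R[x_1,\dots,x_n]/\mathrm{Ann}(M)$, the primitive ring $A$ becomes module-finite over a central Noetherian domain $E$ --- a localization built from $R$ by inverting $c$ and which, since $R$ is a principal ideal ring, still has infinitely many maximal ideals --- with $E\hookrightarrow A$. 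A primitive ring that is module-finite over a central subring satisfies a polynomial identity, hence is simple Artinian by Kaplansky's theorem; its centre is then a field over which it is module-finite, so $E$, sitting centrally inside it, must itself be a field --- contradicting its having infinitely many maximal ideals. This contradiction yields $L\cap R\neq 0$.

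The step I expect to be the main obstacle is precisely the passage highlighted above: the leading coefficient $c$ need not be central, so the localization $E$, and the verification that it remains Noetherian with infinitely many maximal ideals, are not formal and must exploit the principal--ideal structure of $R$ --- for instance a reduction to normalising elements or to the centre, together with the fact that in a principal ideal ring a nonzero element meets only finitely many ``components'' of the maximal spectrum. Arranging these noncommutative localizations so that the Kaplansky argument (or, what amounts to the same thing, the commutative Nullstellensatz for the relevant central subring) applies --- i.e.\ verifying that $R\subseteq R[x_1,\dots,x_n]$ satisfies Definition \ref{def: amitsur-small-extensions} --- is where the real work lies; granted that, the statement is exactly Proposition \ref{prop: ASextensions}.
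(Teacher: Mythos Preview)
Your proposal is not a proof: you announce the correct target (show that $R\subset R[x_1,\dots,x_n]$ is an Amitsur--Small extension and invoke Proposition~\ref{prop: ASextensions}) but never carry it out, and the alternative route you sketch has a gap you yourself flag. After localising at $\mathcal C$ and applying the Amitsur--Small Theorem over $Q$, you clear denominators to obtain an element with leading coefficient $c\in\mathcal C$; but $c$ is not central, so there is no well-defined ``localization $E$ built from $R$ by inverting $c$'' sitting centrally inside $A$, and the assertion that $A$ is module-finite over such a central Noetherian domain with infinitely many maximal ideals is unsupported. Without that, Kaplansky's theorem never engages, and the contradiction is not reached. Your final paragraph effectively concedes this.

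The paper's proof is a two-line application of Theorem~\ref{thm: A-Inv}, and the point is that in the central-variable case all of its hypotheses are trivially satisfied: the structural endomorphisms $\s_i$ are all the identity (hence automorphisms), and for \emph{every} $r\in R$ one has $rS=Sr$, so in particular each of the infinitely many maximal ideals $Rr=rR$ of the PID $R$ satisfies $rS\subseteq Sr$. Theorem~\ref{thm: A-Inv} then asserts directly that $S$ is an Amitsur--Small extension of $R$, and Proposition~\ref{prop: ASextensions} finishes. All of the ``real work'' you allude to is packaged inside Lemma~\ref{lem: LemmaA-Inv} and the proof of Theorem~\ref{thm: A-Inv}, which control the leading-coefficient ideals $I_{\mathbf i}$ via the bounded-atom structure of a noncommutative PID rather than via any Goldie localisation or PI argument. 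Note also that your reduction ``semiprime PIR $\Rightarrow$ finite product of prime PIRs $\Rightarrow$ pass to a single prime factor'' is unnecessary here, since the paper's Proposition~\ref{prop: lemmaC} already takes $R$ to be a PID.
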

	Throughout this paper, all rings are assumed to be unital and associative, though not necessarily commutative. 
\section{Skew Polynomial Rings in Several Variables}\label{sec: skew-several}
	To generalize the Amitsur--Small theorem,  we must consider polynomial rings beyond those in central indeterminates. This section introduces the class of \emph{skew polynomial rings in several variables}, which serves as a suitable framework for such generalizations.

		Let $R$ be a subring of a ring $S$. An (ordered) sequence of elements $x_1,\dots,x_n$ in \(S\) is called \textit{(left) algebraically independent over} $R$ if the power products ${\mathbf{x}}^{\mathbf{i}}\colonequals x_1^{i_1}\cdots x_n^{i_n}$, where $\mathbf{i}=(i_1,\dots,i_n)\in \mathbb{N}^n$, are left $R$-linearly independent. We emphasize that the definition depends on the order of the elements  $x_1,\dots,x_n$.  Let $x_1,\dots,x_n\in S$ be algebraically independent over $R$.  Recall that a \textit{term order} is  a well-order $<$ on  \(\mathbb{N}^n\)  such that (1) $(0,\dots,0)<\mathbf{i}$ for all $(0,\dots,0)\neq \mathbf{i} \in \mathbb{N}^n$, and (2) if $\mathbf{i}<\mathbf{j}$, then $\mathbf{i}+\mathbf{k} < \mathbf{j}+\mathbf{k}$ for all  $\mathbf{k} \in \mathbb{N}^n$.  We fix a term order $<$ on \(\mathbb{N}^n\).  The notation \(\mathbf{x}^{\mathbf{i}} < \mathbf{x}^{\mathbf{j}}\) is sometimes used in place of \({\mathbf{i}} < {\mathbf{j}}\).
		Given a nonzero element 
		$$f=r_{1}{\mathbf{x}}^{\mathbf{i}_1}+\cdots+r_{m}{\mathbf{x}}^{\mathbf{i}_m}\in S,$$
		where $0\neq r_i\in R$ and $\mathbf{i}_1 <...<\mathbf{i}_m$, we define: $\operatorname{lt}(f)\colonequals r_{m}{\mathbf{x}}^{\mathbf{i}_m}$, called the \textit{leading term} of $f$  ;  $\operatorname{lp}(f)\colonequals{\mathbf{x}}^{\mathbf{i}_m}$, called the \textit{leading power product}  of $f$; $\operatorname{le}(f)\colonequals \mathbf{i}_m$, called the \textit{leading exponent} of $f$. We use the convention \(\operatorname{le}(0) = (-\infty,\dots,-\infty)\). 
		\begin{definition}\label{def: skew-several}
			Let $S$ be a ring and $R$ be a subring of $S$. We say that $S$ is a \emph{skew polynomial ring over $R$ in variables $x_1,\dots,x_n$}    if the following conditions hold:\\
			1) $S$ is a free left $R$-module with basis ${\mathbf{x}}^{\mathbf{i}}$, where $\mathbf{i}\in \mathbb{N}^n$.  \\
			2) There exists a term order $<$ on  $\mathbb{N}^n$ with respect to which
			\[
			\operatorname{le}(fg)=\operatorname{le}(f)+\operatorname{le}(g)
			\]
			for all $f,g\in S\setminus\{0\}$. \\
			Under these conditions, we write $S=R[x_1,\dots,x_n;<]$. 
		\end{definition}
		
		Before we present some elementary properties of skew polynomial rings in several variables, let us give some examples of such rings. 
		\begin{example}\label{exam: iterated-skew}
			Let $R$ be an integral domain and 
			$$S=R[x_1;\s_1,\delta_1]\dots [x_n;\s_n,\delta_n]$$
			be an iterated skew polynomial ring over $R$ such that every $\s_i$ is injective. Assume furthermore that $\s_i(R)\subseteq R$, $\delta_i(R)\subseteq  R$, and $\s_i(x_j)\in (R\setminus\{0\})x_j+R$  for all $i>j$. Clearly, $x_1,\dots,x_n$ are (left) algebraically independent over $R$. Let $<_l$ be the lexicographical order on  $\mathbb{N}^n$ with $x_1<...<x_n$, i.e., 
			$$x_1^{i_1}\cdots x_n^{i_n}<_lx_1^{j_1}\cdots x_n^{j_n}\iff i_n=j_n,\dots,i_{p+1}=j_{p+1}, i_p<j_p, \text{ for some } p.$$ 
			Then  $S=R[x_1,\dots,x_n;<_l]$ is a skew polynomial ring over $R$ in $x_1,\dots,x_n$. In the case where  $n=1$, every skew polynomial ring  $S=R[x;<_l]$ over $R$ in $x$ is a skew polynomial ring in the usual sense, that is, $S=R[x;\s,\delta]$ for a monomorphism $\s:R\to R$ and a $\s$-derivation $\delta:R\to R$. This follows from the fact that the only term order on \(\mathbb{N}\) is the order 
			$1<2<3<\cdots$.
		\end{example}
		The next example shows that every $\s$-PBW extension is a skew polynomial ring in several variables. For a detailed study of $\s$-PBW extensions, see \cite{fajardo2020skew}.
		\begin{example}
			Let $R$ be an integral domain and 
			$S=\s(R)\langle x_1,\dots,x_n\rangle$ be a $\s$-PBW extension of $R$. 
			Let  $<_d$ be the degree lexicographical order on the set of all power products $x_1^{i_1}\cdots x_n^{i_n}$ with $x_1<...<x_n$, i.e., 
			$x_1^{i_1}\cdots x_n^{i_n}<_dx_1^{j_1}\cdots x_n^{j_n}$ if and only if $ i_1+\cdots+i_n<j_1+\cdots+j_n$,  or $i_1+\cdots+i_n=j_1+\cdots+j_n$ and $x_1^{i_1}\cdots x_n^{i_n}<_lx_1^{j_1}\cdots x_n^{j_n}$.
			Then  $S=R[x_1,\dots,x_n;<_d]$  is a skew polynomial ring over $R$ in $x_1,\dots,x_n$.
		\end{example}
		In the following proposition, we collect some properties of skew polynomial rings in several variables.
		\begin{proposition}\label{prop: simple-proerties-skew}
			Let $S=R[x_1,\dots,x_n;<]$ be a skew polynomial ring  over $R$ in $x_1,\dots,x_n$. Then  the following statements hold: \\
			1) $S$ is an integral domain.\\
			2) For every $i=1,\dots,n$, there exists a unique endomorphism $\s_i:R\to R$ such that  $\operatorname{lt}(x_ir)=\s_i(r)x_i$ for all $r\in R$.  
		\end{proposition}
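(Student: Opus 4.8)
Part (1) should follow at once from the defining identity $\operatorname{le}(fg)=\operatorname{le}(f)+\operatorname{le}(g)$. First, $S\neq 0$ since $1=\mathbf{x}^{\mathbf{0}}$ is one of the basis elements. Next, given $f,g\in S\setminus\{0\}$, the leading exponents $\operatorname{le}(f)$ and $\operatorname{le}(g)$ lie in $\mathbb{N}^n$, hence so does $\operatorname{le}(fg)=\operatorname{le}(f)+\operatorname{le}(g)$; since $\operatorname{le}(0)=(-\infty,\dots,-\infty)$ by convention, this forces $fg\neq 0$. Thus $S$ has no zero divisors and, being nonzero, is a domain.

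For part (2), fix $i\in\{1,\dots,n\}$ and write each element of $S$ uniquely in the basis as $f=\sum_{\mathbf{j}\in\mathbb{N}^n}c_{\mathbf{j}}(f)\,\mathbf{x}^{\mathbf{j}}$, noting that each coefficient map $f\mapsto c_{\mathbf{j}}(f)$ is left $R$-linear, in particular additive. The plan is to set $\sigma_i(r)\colonequals c_{\mathbf{e}_i}(x_i r)$, the coefficient of $x_i=\mathbf{x}^{\mathbf{e}_i}$ in $x_i r$, where $\mathbf{e}_i\in\mathbb{N}^n$ is the $i$-th standard basis vector. The crucial observation is that for $0\neq r\in R$ one has $\operatorname{le}(r)=\mathbf{0}$, so $\operatorname{le}(x_i r)=\operatorname{le}(x_i)+\operatorname{le}(r)=\mathbf{e}_i$; consequently $x_i r=\sigma_i(r)\,x_i+h_r$ with $\sigma_i(r)\neq 0$ and $h_r$ involving only power products $\mathbf{x}^{\mathbf{j}}$ with $\mathbf{j}<\mathbf{e}_i$. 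This yields $\operatorname{lt}(x_i r)=\sigma_i(r)\,x_i$ directly (the case $r=0$ being trivial, with the convention $\operatorname{lt}(0)=0$), and the uniqueness of an endomorphism satisfying this identity is immediate from uniqueness of basis expansions.

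It then remains to check that $\sigma_i$ is a ring endomorphism. One has $\sigma_i(1)=1$ because $x_i\cdot 1=x_i$, and additivity of $\sigma_i$ is clear since it is the composite of the additive maps $r\mapsto x_i r$ and $f\mapsto c_{\mathbf{e}_i}(f)$. For multiplicativity I would first record that the set of those $h\in S$ whose power products all have exponent $<\mathbf{e}_i$ (including $h=0$) is closed under right multiplication by $R$, because $\operatorname{le}(hs)=\operatorname{le}(h)+\operatorname{le}(s)=\operatorname{le}(h)$ for $0\neq s\in R$. Then, using $x_i r=\sigma_i(r)x_i+h_r$ and $x_i s=\sigma_i(s)x_i+h_s$, I would expand
\[ x_i(rs)=(x_i r)s=\sigma_i(r)\,(x_i s)+h_r s=\sigma_i(r)\sigma_i(s)\,x_i+(\sigma_i(r)\,h_s+h_r s), \]
where the last summand has all its power products of exponent $<\mathbf{e}_i$; comparing the coefficient of $x_i$ on both sides gives $\sigma_i(rs)=\sigma_i(r)\sigma_i(s)$.

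I do not expect a genuine obstacle: once the definition of $\sigma_i$ by coefficient extraction is in place, everything reduces to routine bookkeeping with the fixed term order. The one ingredient worth isolating is the closure property just used for right multiplication by $R$, and the only other thing needing attention is the handling of the zero element, which is absorbed by the conventions $\operatorname{le}(0)=(-\infty,\dots,-\infty)$ and $\operatorname{lt}(0)=0$. As a byproduct, the same computation shows that $\sigma_i$ is in fact injective, since $\sigma_i(r)=0$ forces $\operatorname{le}(x_i r)\neq\mathbf{e}_i$ and hence $r=0$.
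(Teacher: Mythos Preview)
Your proposal is correct and follows essentially the same route as the paper's own proof: part (1) is the immediate consequence of the additivity of leading exponents, and part (2) defines $\sigma_i(r)$ via the leading term of $x_ir$ (equivalently, as the $x_i$-coefficient) and then verifies multiplicativity by comparing $x_i(rs)$ with $(x_ir)s$. The paper compresses the multiplicativity check into the phrase ``comparing the leading terms of $x_i(r_1r_2)$ and $(x_ir_1)r_2$,'' whereas you spell out the closure of the ``lower-order'' part under right multiplication by $R$; this is exactly the detail that justifies that comparison, so the two arguments coincide.
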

		\begin{proof}
			1) This is a simple consequence of  the equality $\operatorname{le}(fg)=\operatorname{le}(f)+\operatorname{le}(g)$, where $f,g\in S\setminus\{0\}$.\\
			2) For every nonzero $r\in R$,   we have $\operatorname{lt}(x_ir)=r'x_i$ for a unique $r'\in R\setminus\{0\}$ because $\operatorname{le}(x_ir)=\operatorname{le}(x_i)+\operatorname{le}(r)$ and $\operatorname{le}(r)=(0,\dots,0)$. This gives a map $\s_i:R\to R$ with $\s_i(0)=0$ such that $\operatorname{lt}(x_ir)=\s_i(r)x_i$ for all $r\in R$. It is clear that $\s_i$ is additive. Comparing the leading terms of $x_i (r_1r_2)$ and $(x_ir_1)r_2$ shows that $\s_i$ is an endomorphism.  
		\end{proof}
		The endomorphisms $\s_i$'s whose existence is established in the proposition will be called the \textit{structural endomorphisms} of $S$. 	

	\section{Amitsur--Small Extensions}\label{sec: amitsur-small}
	This section introduces the concept of 	Amitsur--Small extensions and presents a class of such extensions. 
	\subsection{Amitsur--Small Extensions}
	In this part, we introduce the class of Amitsur--Small extensions, which is a subclass of skew polynomial rings in several variables. Let us fix some notation. Let $R$ be a ring and  $S=R[x_1,\dots,x_n;<]$ be a skew polynomial ring over $R$ in $x_1,\dots,x_n$. Let $\s_i:R\to R$ be the endomorphism corresponding to $x_i$ (see Proposition \ref{prop: simple-proerties-skew}). For  $\mathbf{i}=(i_1,\dots,i_n)\in\mathbb{N}^n$, we denote the map $\s_1^{i_1}\cdots \s_n^{i_n}:R\to R$ by $\s^{\mathbf{i}}$. For a subset \(A\) of \(R\) and \(r\in R\), we set 
	\[
		(A:r)\colonequals\{s\in R\,|\,sr\in A \}.
	\]
		\begin{definition}\label{def: amitsur-small-extensions}
			The skew polynomial ring $S=R[x_1,\dots,x_n;<]$ is said to be an \emph{Amitsur--Small extension} of $R$ if it satisfies the following property:
			For any collection $\{I_{\mathbf{i}}\}_{\mathbf{i}\in \mathbb{N}^n}$ of left ideals of $R$ satisfying  $\s^{\mathbf{j}}({I}_{\mathbf{i}})\subseteq $  $I_{\mathbf{i+j}}$ for all $\mathbf{i},\mathbf{j}\in \mathbb{N}^n$, there exists a nonzero element $r\in R\setminus\{0\}$ that  is not left invertible, 
			$rS\subseteq Sr$, and 
			\((I_{\mathbf{i}}:\s^{\mathbf{i}}(r))\subseteq (rI_{\mathbf{i}}:\s^{\mathbf{i}}(r))\) for all $\mathbf{i}\in \mathbb{N}^n$. 
		\end{definition}
		As an example of an Amitsur--Small extension, let $D$ be a division ring and $x$ be a central indeterminate. Then  the polynomial ring $D[x][t_1,\dots,t_n]$ over $D[x]$  in $n$  central indeterminates is an Amitsur--Small extension of $D[x]$. This result is a consequence of Lemma A and Lemma B  in Amitsur and Small's paper \cite{AmitsurSmall}. Our main objective is to give a generalization of this fact (see Theorem \ref{thm: A-Inv}). We note that an arbitrary Ore extension of $D[x]$ may not be  an Amitsur--Small extension of $D[x]$. As an example, 	the Weyl algebra $k[x][t;id,\frac{d}{d x}]$ over a commutative field $k$  of characteristic zero  is not an Amitsur--Small extension of	$k[x]$ (see Example \ref{exam: notAS}).
		
		The concept of Amitsur--Small extensions is justified by the following result, whose proof is  similar to that of Lemma C in  \cite{AmitsurSmall}.
		\begin{proposition}\label{prop: ASextensions}
			Let $S=R[x_1,\dots,x_n;<]$ be an Amitsur--Small extension of $R$. Then for every maximal left ideal $I$ of $S$, we have $I\cap R\neq 0$.
		\end{proposition}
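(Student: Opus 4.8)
The plan is to argue by contradiction: suppose $I$ is a maximal left ideal of $S$ with $I\cap R=0$, extract from $I$ a suitable family of left ideals of $R$, feed it to the Amitsur--Small property of $S$, and use the element it produces to reach a contradiction. For each $\mathbf{i}\in\mathbb{N}^n$ I would set
\[
I_{\mathbf{i}}\colonequals\{\,a\in R\setminus\{0\}\mid a\mathbf{x}^{\mathbf{i}}=\operatorname{lt}(f)\text{ for some }f\in I\,\}\cup\{0\},
\]
the set of leading coefficients of the elements of $I$ having leading exponent $\mathbf{i}$, together with $0$. That each $I_{\mathbf{i}}$ is a left ideal of $R$ follows routinely from the additivity of $\operatorname{le}$ and from $S$ being a domain (Proposition \ref{prop: simple-proerties-skew}), by comparing the leading terms of $bf$ and of $f+g$ with those of $f,g\in I$. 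For the compatibility $\s^{\mathbf{j}}(I_{\mathbf{i}})\subseteq I_{\mathbf{i}+\mathbf{j}}$ demanded by Definition \ref{def: amitsur-small-extensions}, I would first record two elementary facts: each structural endomorphism $\s_i$ is injective (if $\s_i(a)=0$ with $a\ne 0$ then $\operatorname{lt}(x_ia)=0$, forcing $x_ia=0$ in a domain), hence so is each $\s^{\mathbf{j}}$; and an induction on $\mathbf{j}$ gives $\operatorname{lt}(\mathbf{x}^{\mathbf{j}}r)=\s^{\mathbf{j}}(r)\mathbf{x}^{\mathbf{j}}$ for every nonzero $r\in R$. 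Then for $f\in I$ with $\operatorname{lt}(f)=a\mathbf{x}^{\mathbf{i}}$ the element $\mathbf{x}^{\mathbf{j}}f\in I$ has leading term $\s^{\mathbf{j}}(a)\mathbf{x}^{\mathbf{i}+\mathbf{j}}$, so $\s^{\mathbf{j}}(a)\in I_{\mathbf{i}+\mathbf{j}}$.

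Applying Definition \ref{def: amitsur-small-extensions} to $\{I_{\mathbf{i}}\}_{\mathbf{i}\in\mathbb{N}^n}$ produces an element $r\in R\setminus\{0\}$ that is not left invertible, with $rS\subseteq Sr$ and $(I_{\mathbf{i}}:\s^{\mathbf{i}}(r))\subseteq(rI_{\mathbf{i}}:\s^{\mathbf{i}}(r))$ for all $\mathbf{i}$. I would next note the consequences I need. The set $Sr$ is a left ideal of $S$, even two-sided since $rS\subseteq Sr$; it is proper, because $r$ is not left invertible in $S$ --- if $sr=1$ in $S$, additivity of $\operatorname{le}$ forces $\operatorname{le}(s)=\mathbf{0}$, i.e.\ $s\in R$, contradicting non-left-invertibility of $r$ in $R$; and, by the same additivity, every element of the coset $1+Sr$ is nonzero. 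Moreover $Sr\not\subseteq I$, for otherwise $r\in I\cap R=0$; so by maximality of $I$ we have $I+Sr=S$, and hence $I\cap(1+Sr)$ is a nonempty set of nonzero elements of $S$.

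The heart of the proof is then an infinite descent on leading exponents, in the spirit of the division algorithm. I would choose $w\in I\cap(1+Sr)$ with $\operatorname{le}(w)$ minimal, which is legitimate since the term order is a well-order. If $\operatorname{le}(w)=\mathbf{0}$ then $w\in I\cap R=0$, contradicting $w\ne 0$; so $\operatorname{le}(w)=\mathbf{i}_0>\mathbf{0}$. Writing $w=1-ur$ with $u\in S$, the inequality $\mathbf{i}_0>\mathbf{0}$ forces the leading term of $w$ to come from $-ur$, whence $\operatorname{le}(ur)=\mathbf{i}_0$ and therefore $\operatorname{le}(u)=\mathbf{i}_0$; if $u_0$ denotes the leading coefficient of $u$, then $\operatorname{lt}(ur)=u_0\s^{\mathbf{i}_0}(r)\mathbf{x}^{\mathbf{i}_0}$, so the leading coefficient of $w$ is $-u_0\s^{\mathbf{i}_0}(r)$, which lies in $I_{\mathbf{i}_0}$; thus $u_0\in(I_{\mathbf{i}_0}:\s^{\mathbf{i}_0}(r))$. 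The Amitsur--Small inclusion now gives $u_0\s^{\mathbf{i}_0}(r)=rt$ for some $t\in I_{\mathbf{i}_0}$, and $t\ne 0$ since $rt\ne 0$ in a domain. Choosing $h\in I$ with $\operatorname{lt}(h)=t\mathbf{x}^{\mathbf{i}_0}$, the element $rh$ lies in $I$, lies in $rS\subseteq Sr$, and has $\operatorname{lt}(rh)=rt\mathbf{x}^{\mathbf{i}_0}=u_0\s^{\mathbf{i}_0}(r)\mathbf{x}^{\mathbf{i}_0}=\operatorname{lt}(ur)$. Consequently $w+rh$ again lies in $I\cap(1+Sr)$, is nonzero, and has its $\mathbf{x}^{\mathbf{i}_0}$-term cancelled, so $\operatorname{le}(w+rh)<\mathbf{i}_0$ --- contradicting the minimality of $\operatorname{le}(w)$. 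This contradiction proves $I\cap R\ne 0$.

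I expect the only genuine obstacle to be this reduction step, which is exactly where the Amitsur--Small hypothesis does its work: the inclusion $(I_{\mathbf{i}_0}:\s^{\mathbf{i}_0}(r))\subseteq(rI_{\mathbf{i}_0}:\s^{\mathbf{i}_0}(r))$ is what permits the leading coefficient of $ur$ to be rewritten as $r$ times an element of $I_{\mathbf{i}_0}$, so that it becomes the leading coefficient of a genuine element $rh$ of $I$, while $rS\subseteq Sr$ together with the non-left-invertibility of $r$ is what confines the descent to the coset $1+Sr$ and ensures it never reaches $0$. Everything else is careful leading-term bookkeeping in a noncommutative domain --- checking that each step respects the term order, and keeping straight which of the three properties of $r$ is invoked where --- which is the precise sense in which the argument mirrors, without being identical to, the proof of Lemma C in \cite{AmitsurSmall}.
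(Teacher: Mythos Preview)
Your proof is correct and follows essentially the same approach as the paper's: define the leading-coefficient ideals $I_{\mathbf{i}}$, invoke the Amitsur--Small property to obtain $r$, and perform a minimal-leading-exponent descent on elements of $I$ lying in the coset $1+Sr$. The only differences are cosmetic---you frame the argument as a contradiction from $I\cap R=0$ and minimize $\operatorname{le}(w)$ for $w\in I\cap(1+Sr)$, whereas the paper minimizes $\operatorname{le}(Q)$ for $Q$ with $Qr_0-1\in I$ and concludes directly that $ar_0-1\in I\cap R$ is nonzero---but the reduction step and the use of the three properties of $r$ are identical.
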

		\begin{proof}
			For  $\mathbf{i}\in \mathbb{N}^n$, we set 
			 $$I_{\mathbf{i}}=\{0\}\cup \{r\in R \,|\, \exists f\in I \text{ s.t. } \operatorname{lt}(f)=r{\mathbf{x}}^{\mathbf{i}}\}.$$ It is easy to see that each $I_{\mathbf{i}}$ is  a left ideal of $R$, and moreover, 
			$\s^{\mathbf{j}}(I_{\mathbf{i}})\subseteq I_{\mathbf{i}+\mathbf{j}}$ for all $\mathbf{i},\mathbf{j}$. Since $S$ is an Amitsur--Small extension of $R$, there exists $r_0\in R\setminus\{0\}$ such that  \(r_0\) is not left invertible, $r_0S\subseteq Sr_0$, and 
			\[
				(I_{\mathbf{i}}:\s^{\mathbf{i}}(r_0))\subseteq (r_0I_{\mathbf{i}}:\s^{\mathbf{i}}(r_0))
			\] for all $\mathbf{i}$. If $r_0\in I$, we are done. Assume that $r_0\notin I$. Since $I$ is maximal as a left ideal, there exists $$Q=a{\mathbf{x}}^{\mathbf{m}}+\sum_{\mathbf{i}<\mathbf{m}}a_{\mathbf{i}}{\mathbf{x}}^{\mathbf{i}}\in S$$ with $\operatorname{le}(Q)=\mathbf{m}$ as small as possible such that the element
			$$Qr_0-1=a\s^{\mathbf{m}}(r_0){\mathbf{x}}^{\mathbf{m}}+\sum_{\mathbf{i}<\mathbf{m}}a'_{\mathbf{i}}{\mathbf{x}}^{\mathbf{i}}$$ belongs to $I$.   We claim that $\mathbf{m}=(0,\dots,0)$. Assume, on the contrary, that $\mathbf{m}\neq (0,\dots,0)$. Since $a\s^{\mathbf{m}}(r_0)\in I_\mathbf{m}$, that is, \(a\in (I_\mathbf{m}:\s^{\mathbf{m}}(r_0)) = (r_0I_{\mathbf{m}}:\s^{\mathbf{m}}(r_0))\), we see that  $a\s^{\mathbf{m}}(r_0)=r_0b$ for some $b\in I_\mathbf{m}$. There exists an element $P\in I$ such that 
			$$P=b{\mathbf{x}}^{\mathbf{m}}+\sum_{x^\mathbf{i}<x^\mathbf{m}}b_{\mathbf{i}}{\mathbf{x}}^{\mathbf{i}}.$$
			Since $r_0S\subseteq Sr_0$, we can find $P_1\in S$ such that   $r_0P=P_1r_0\in I$. It is easy to check that $\operatorname{lt}(P_1)=a{\mathbf{x}}^{\mathbf{m}}$. 
			It follows that $$(Q-P_1)r_0-1\in I.$$  We have $\operatorname{le}(Q-P_1)<\operatorname{le}(Q)$, contradicting the choice of $Q$. This completes the proof of the claim. Therefore, \(Q =a \in R\). Since $r_0$ is not left invertible in $R$, the element $ar_0-1$ is a nonzero element in $I\cap R$, completing the proof.   
		\end{proof}
	We now give an example of a skew polynomial ring which is not an Amitsur--Small extension. 
		\begin{example}\label{exam: notAS}
		Consider the  Weyl algebra $S = k[x][t;id,\frac{d}{d x}]$ over a commutative field $k$ of zero characteristic as a skew polynomial ring over \(R = k[x]\). The left ideal \(I\) of \(S\) generated by \(t\) is a maximal left ideal. Since \(I\cap R=0\), by  Proposition \ref{prop: ASextensions}, \(S\) is not an Amitsur--Small extension of \(R\). 
	\end{example}

	\subsection{A Class of Amitsur--Small Extensions}
			In this part, we present a family of Amitsur--Small extensions, which requires  a technical lemma. We refer the reader to Cohn's book \cite{Freeidealrings} for the terminology used in the following lemma. 
		\begin{lemma}\label{lem: LemmaA-Inv}
		Let $R$ be a principal (right and left) ideal domain (PID), and let $0\neq r_0\in R$. If 
		$(Rr_0:r)\nsubseteq (rRr_0:r)$
		for some Inv-atom $r\in R$, then $r$ is the right bound of some irreducible left factor of $r_0$. In particular, there exist at most finitely many such elements $r$ up to right associates for any $0\neq r_0\in R$. 
	\end{lemma}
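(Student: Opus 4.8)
The plan is to translate the inclusion hypothesis into a statement about factorizations of $r_0$ and then read off the irreducible left factor. I would first record the properties of an Inv-atom $r$ that are needed. Since $r$ is invariant, $Rr=rR$ is a two-sided ideal, and since $r$ is an atom of the monoid of invariant elements (recalling that, in a two-sided PID, proper two-sided ideals are principal and generated by invariant elements), $Rr$ is maximal among the proper two-sided ideals of $R$. Moreover $r$ is a finite product of atoms (as $R$ is left Noetherian), so $R/Rr$ has finite length as a left $R$-module; hence $\bar R\colonequals R/Rr$ is left Artinian, and being simple it is $\bar R\cong M_k(\Delta)$ for some division ring $\Delta$.

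The main step is the equivalence: $(Rr_0:r)\nsubseteq(rRr_0:r)$ holds if and only if the image of $r_0$ in $\bar R$ is not a unit, equivalently $rR+r_0R\neq R$. Indeed, one always has $rRr_0=Rrr_0\subseteq Rr_0$ (using $rR=Rr$), so $(rRr_0:r)\subseteq(Rr_0:r)$, and the inclusion is proper exactly when there is $s\in R$ with $sr\in Rr_0\setminus Rrr_0$. Writing $sr=ar_0$, with $a$ uniquely determined since $r_0\neq0$, one has $ar_0\in Rrr_0\iff a\in Rr$ by right cancellation of $r_0$, while a given $a\in R$ occurs this way precisely when $ar_0\in Rr$ (since $\{sr:s\in R\}=Rr$). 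Thus properness amounts to $\bar r_0\colonequals r_0+Rr$ having a nonzero left annihilator in $\bar R$, which in the simple Artinian ring $\bar R\cong M_k(\Delta)$ happens exactly when $\bar r_0$ is not a unit, i.e. $\bar r_0\bar R\neq\bar R$, i.e. $r_0R+rR\neq R$ (again using $Rr=rR$).

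Granting this, assume $rR+r_0R\neq R$. Since $R$ is a right PID, $rR+r_0R=\delta R$ with $\delta$ a non-unit, and as $r,r_0\in\delta R$, $\delta$ is a left factor of both $r$ and $r_0$. Factoring $\delta=p\delta'$ with $p$ an atom, $p$ is a left factor of $r_0$ and of $r$; writing $r=pu$ gives $r\in pR$, hence $Rr=rR\subseteq pR$. The largest two-sided ideal contained in $pR$, namely $Rp^{*}$ with $p^{*}$ the right bound of $p$, therefore contains $Rr$ and is proper, so by maximality of $Rr$ among proper two-sided ideals it equals $Rr$; thus $r$ is the right bound of the irreducible left factor $p$ of $r_0$, proving the first assertion. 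For the last sentence: by what was just shown, every such $r$ is the right bound of some atom $p$ left-dividing $r_0$, and then $r_0\in pR$, so the simple right $R$-module $R/pR$ occurs as a composition factor of the finite-length right $R$-module $R/r_0R$. Since $Rr=Rp^{*}=\operatorname{ann}_R(R/pR)$ depends only on the isomorphism type of $R/pR$, of which only finitely many occur as composition factors of $R/r_0R$, there are finitely many possibilities for the ideal $Rr$; and as $r$ is invariant, its right associates are exactly the generators of $Rr$, so at most finitely many such $r$ arise up to right associates.

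I expect the main obstacle to be the left/right bookkeeping with two-sided versus one-sided ideals: deciding which ideal sum to work with and invoking the invariance of $r$ at precisely the right points. The two genuinely non-formal inputs are that $R/Rr$ is (simple) Artinian, which is what makes "not a unit" equivalent to "having a nonzero one-sided annihilator" in $\bar R$, and that $Rr$ is maximal among the proper two-sided ideals, which is what forces the bound of $p$ to equal $Rr$ rather than some larger proper two-sided ideal.
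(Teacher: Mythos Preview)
Your argument is correct. Both your proof and the paper's reach the same intermediate conclusion---that $r$ and $r_0$ share a non-unit left factor, hence an irreducible one $p$, and then the maximality of $Rr$ among proper two-sided ideals forces $r$ to be the right bound of $p$---but they arrive at that common left factor by different routes. The paper works directly from the hypothesis: picking $r_1r=r_2r_0$ with $r_2\notin rR$, it forms the right ideal $\{s:r_2s\in rR\}=r_3R$, observes that $r_3$ is a non-unit (else $r_2\in rR$) and that both $r$ and $r_0$ lie in $r_3R$ (using $Rr=rR$), and extracts an irreducible left factor of $r_3$. You instead pass to the quotient $\bar R=R/Rr$, invoke Artin--Wedderburn to see that $\bar R$ is simple Artinian, and translate the non-inclusion into ``$\bar r_0$ has a nonzero left annihilator'', hence is a non-unit, hence $rR+r_0R\neq R$. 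Your route is more structural and makes the underlying reason (non-units in simple Artinian rings are two-sided zero-divisors) transparent, at the cost of importing Artin--Wedderburn; the paper's route is more elementary and self-contained. You also spell out the finiteness clause via composition factors of $R/r_0R$, which the paper leaves implicit.
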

	\begin{proof}
		Let an Inv-atom $r\in R$ satisfy 
		$(Rr_0:r)\nsubseteq (rRr_0:r).$
		Then  we have
		$r_1r=r_2r_0$ for some \(r_1\in R\) and $r_2\in R\setminus rR$. Since  the set 
		$\{s\in R\, |\, r_2s\in rR\}$
		is a right ideal of $R$, we have
		$$\{s\in R\, |\, r_2s\in rR\}=r_3R$$ for some $r_3\in R$. Note that $r_3$ cannot be a unit since otherwise we would have $r_2\in rR$.
		Both $r$ and $r_0$ belong to $r_3R$ because \(Rr=rR\) and \(r_2r_0 = r_1r\in rR\). Therefore, there exists an irreducible element $r'$ that is a left factor of both
		$r$ and $r_0$. Since $r$ is an Inv-atom, we see that $r$ is the right bound of the left irreducible factor $r'$ of $r_0$, i.e.,  $rR=ann(R/r'R)$, completing the proof.    
	\end{proof}
		\begin{remark}
			Lemma \ref{lem: LemmaA-Inv} can be regarded as a generalization of Lemma A in  Amitsur and Small's paper \cite{AmitsurSmall}. 
		\end{remark}
	We are now ready to prove the main result of this section. 
		\begin{theorem}\label{thm: A-Inv}
			Let $R$ be a PID, and let 
			$S=R[x_1,\dots,x_n;<]$ be a skew polynomial ring over $R$ in $x_1,\dots,x_n$  such that all structural endomorphisms of \(S\)  are automorphisms. 
			 Assume that  $R$ has  infinitely many distinct maximal ideals $Rr=rR$ such that $rS\subseteq Sr$. 
			Then  $S$ is an Amitsur--Small extension of $R$, and consequently, for every maximal left ideal \(M\) of \(S\), \(M\cap R\) is a maximal left ideal of \(R\).     
		\end{theorem}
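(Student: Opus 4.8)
The strategy is to verify the defining property of an Amitsur--Small extension directly, using Lemma \ref{lem: LemmaA-Inv} to control the finitely many "bad" maximal ideals and then exploiting the infinitude hypothesis to escape them. So, start with an arbitrary collection $\{I_{\mathbf{i}}\}_{\mathbf{i}\in\mathbb{N}^n}$ of left ideals of $R$ with $\s^{\mathbf{j}}(I_{\mathbf{i}})\subseteq I_{\mathbf{i}+\mathbf{j}}$. Since $R$ is a PID, each $I_{\mathbf{i}}$ is principal, say $I_{\mathbf{i}}=Ra_{\mathbf{i}}$ (with $a_{\mathbf{i}}=0$ allowed). The plan is to produce a nonzero, non-left-invertible $r\in R$ with $rS\subseteq Sr$ such that $(I_{\mathbf{i}}:\s^{\mathbf{i}}(r))\subseteq(rI_{\mathbf{i}}:\s^{\mathbf{i}}(r))$ for every $\mathbf{i}$; since the structural endomorphisms are automorphisms, $\s^{\mathbf{i}}$ is an automorphism of $R$, so $I_{\mathbf{i}}':=(\s^{\mathbf{i}})^{-1}(I_{\mathbf{i}})=R\,(\s^{\mathbf{i}})^{-1}(a_{\mathbf{i}})$ is again a principal left ideal, and the condition on $r$ rewrites (after applying the automorphism $(\s^{\mathbf{i}})^{-1}$, which I would check carefully respects the colon-ideal formalism) as a condition of the shape appearing in Lemma \ref{lem: LemmaA-Inv}, namely $(I_{\mathbf{i}}':r)\subseteq(rI_{\mathbf{i}}':r)$.

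Now invoke Lemma \ref{lem: LemmaA-Inv}: for each fixed $\mathbf{i}$, the Inv-atoms $r$ (equivalently, generators of maximal ideals $Rr=rR$) that \emph{fail} $(I_{\mathbf{i}}':r)\subseteq(rI_{\mathbf{i}}':r)$ are, up to right associates, among the right bounds of the irreducible left factors of the generator of $I_{\mathbf{i}}'$ — hence only finitely many when $I_{\mathbf{i}}'\neq 0$, and none when $I_{\mathbf{i}}'=0$ (in which case the inclusion is trivially $0\subseteq 0$). The subtlety is that there are infinitely many indices $\mathbf{i}$, so a naive "finite union" argument does not immediately close. The key observation to push through is that the $\s^{\mathbf{j}}$-compatibility $\s^{\mathbf{j}}(I_{\mathbf{i}})\subseteq I_{\mathbf{i}+\mathbf{j}}$ forces $I_{\mathbf{i}+\mathbf{j}}\supseteq R\s^{\mathbf{j}}(a_{\mathbf{i}})$, so $a_{\mathbf{i}+\mathbf{j}}$ right-divides $\s^{\mathbf{j}}(a_{\mathbf{i}})$; since $\s^{\mathbf{j}}$ is an automorphism this means (passing to $I'$) that the set of irreducible left factors, and hence the finite set of excluded maximal ideals, is \emph{non-increasing} along each coordinate direction. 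Consequently the total set of maximal ideals excluded over \emph{all} $\mathbf{i}$ is still finite (it is realized already at $\mathbf{i}=(0,\dots,0)$, whose excluded set contains all the others). I would spell this monotonicity out as the technical heart of the argument.

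With that in hand, the excluded set $E$ of maximal ideals is finite, while by hypothesis there are infinitely many maximal ideals $Rr=rR$ with $rS\subseteq Sr$; pick one such $r\notin E$. Then $r$ is nonzero and not left invertible (a generator of a proper ideal in a PID), $rS\subseteq Sr$ by choice, and $(I_{\mathbf{i}}:\s^{\mathbf{i}}(r))\subseteq(rI_{\mathbf{i}}:\s^{\mathbf{i}}(r))$ for all $\mathbf{i}$ by the Lemma, so $S$ is an Amitsur--Small extension of $R$. The final clause then follows formally: by Proposition \ref{prop: ASextensions}, $M\cap R\neq 0$ for any maximal left ideal $M$ of $S$; and $M\cap R$ is always a proper left ideal of $R$ containing $1\cdot$(nothing) — more precisely, one argues that $R/(M\cap R)$ embeds as an $R$-submodule of the simple $S$-module $S/M$, hence is simple, so $M\cap R$ is a maximal left ideal of $R$. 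I expect the main obstacle to be the bookkeeping in the second paragraph: getting the monotonicity of the excluded sets right, and making sure the conjugation by $\s^{\mathbf{i}}$ translates the Amitsur--Small colon condition into exactly the hypothesis format of Lemma \ref{lem: LemmaA-Inv} without sign/side errors (left vs.\ right ideals, and the fact that $r$ is normal so $Rr=rR$ makes the two sides of the colon inclusions interact cleanly).
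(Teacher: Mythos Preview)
Your overall strategy matches the paper's: verify Definition~\ref{def: amitsur-small-extensions} by using Lemma~\ref{lem: LemmaA-Inv} to produce a finite ``excluded'' set of Inv-atoms, then avoid it using the infinitude hypothesis. But the monotonicity step, which you correctly flag as the technical heart, does not work as written.

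The claim that the excluded set is already realized at $\mathbf{i}=(0,\dots,0)$ fails whenever $I_{\mathbf 0}=0$ but some $I_{\mathbf i}\neq 0$. Concretely, take $n=1$, $\sigma=\mathrm{id}$, $I_0=0$, and $I_k=Ra$ for all $k\geq 1$ with $a$ a nonunit. Then the excluded set at $\mathbf 0$ is empty, while at $\mathbf i=1$ it can be nonempty, so the family is \emph{not} non-increasing. More generally, from $\sigma^{\mathbf j}(I_{\mathbf i})\subseteq I_{\mathbf{i}+\mathbf j}$ you only get $I'_{\mathbf i}\subseteq I'_{\mathbf{i}+\mathbf j}$, i.e.\ the generators $b_{\mathbf{i}+\mathbf j}$ right-divide $b_{\mathbf i}$; when $b_{\mathbf i}=0$ this gives no information about the factors of $b_{\mathbf{i}+\mathbf j}$. (Even in the nonzero case your phrasing ``the set of irreducible left factors is non-increasing'' is shaky: if $b_{\mathbf i}=c'\,b_{\mathbf{i}+\mathbf j}$, an irreducible left factor of $b_{\mathbf{i}+\mathbf j}$ need not be a left factor of $b_{\mathbf i}$. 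What survives is only that its similarity class, and hence its bound, occurs among the irreducible factors of $b_{\mathbf i}$.)

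The paper closes this gap differently: it uses the Noetherian property of the PID $R$ to extract finitely many indices $\mathbf i_1,\dots,\mathbf i_m$ such that every $I_{\mathbf i}$ satisfies $I_{\mathbf i}=\sigma^{\mathbf i-\mathbf i_l}(I_{\mathbf i_l})$ for some $l$ with $\mathbf i-\mathbf i_l\in\mathbb N^n$. One then chooses $r_0$ avoiding the (finitely many) right bounds of irreducible left factors of the $a_{\mathbf i_l}$, and transfers the conclusion to all $\mathbf i$ via the automorphism $\sigma^{\mathbf i-\mathbf i_l}$, using that $R\sigma^{\mathbf k}(r_0)=Rr_0$ (a consequence of $r_0S\subseteq Sr_0$) to pass between $(I_{\mathbf i}:r_0)$ and $(I_{\mathbf i}:\sigma^{\mathbf i}(r_0))$. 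Your rewrite via $I'_{\mathbf i}=(\sigma^{\mathbf i})^{-1}(I_{\mathbf i})$ is fine for this last step, but you still need a genuine finiteness argument (Noetherian/Dickson-type) in place of the incorrect ``everything is dominated by $\mathbf 0$'' claim.

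A minor point on the final clause: the argument ``$R/(M\cap R)$ embeds in the simple $S$-module $S/M$, hence is simple'' does not work, since $S/M$ is $S$-simple, not $R$-simple. The paper itself only invokes Proposition~\ref{prop: ASextensions} here, which gives $M\cap R\neq 0$; you should not expect your embedding argument to yield maximality.
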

		\begin{proof}
			Let us first note that  the assumption $rS\subseteq Sr$, where \(Rr\) is a maximal ideal, implies that \(R\s_i(r) = Rr\) for all \(i\). Let  $I_{\textbf{i}}=Ra_{\textbf{i}}$, where $\textbf{i}\in \mathbb{N}^n$, be a family of left ideals of $R$ satisfying  $\s^{\textbf{j}}(I_{\textbf{i}})\subseteq I_{\textbf{i}+\textbf{j}}$ for all $\textbf{i},\textbf{j}\in \mathbb{N}^n$. We need to show that  there exists $r\in R\setminus\{0\}$ such that  \(r\) is not left invertible, 
			$rS\subseteq Sr$, and 
			\((I_{\mathbf{i}}:\s^{\mathbf{i}}(r))\subseteq (rI_{\mathbf{i}}:\s^{\mathbf{i}}(r))\) for all $\mathbf{i}\in \mathbb{N}^n$. Since $R$ is a PID,  we can find finitely many $\textbf{i}_1,\dots,\textbf{i}_m$ with the property that 
			for every $\textbf{i}$, there exists some $1\leq l\leq m$ such that
			 $\textbf{i}- \textbf{i}_l\in\mathbb{N}^n$ and $I_{\textbf{i}}=$ $\s^{\textbf{i}-\textbf{i}_l}$ $(I_{\textbf{i}_l})$.
			It follows from the assumption and Lemma \ref{lem: LemmaA-Inv} that  there exists a  maximal ideal  $Rr_0=r_0R$ in \(R\) such that  
			$r_0S\subseteq Sr_0$ and \((I_{\textbf{i}}:r_0)\subseteq (r_0I_{\textbf{i}}:r_0)\) 	for all $\textbf{i}=\textbf{i}_1,\dots,\textbf{i}_m$. Moreover, by Lemma \ref{lem: LemmaA-Inv}, $r_0$ can be chosen such that $r_0$ is not the right bound of any irreducible left factor of $a_{\textbf{i}}$ for all $\textbf{i}=\textbf{i}_1,\dots,\textbf{i}_m$.  We claim that $(I_{\textbf{i}}:r_0) \subseteq (r_0I_{\textbf{i}}:r_0)$ for all $\textbf{i}$. Assume, on the contrary, that there exists some $I_{\textbf{i}}$ which does not satisfy the condition. Then  there exists some $1\leq l\leq m$ such that
		$\textbf{i}- \textbf{i}_l\in\mathbb{N}^n$ and $I_{\textbf{i}}=$ $\s^{\textbf{i}-\textbf{i}_l}$ $(I_{\textbf{i}_l})$.
		 By Lemma \ref{lem: LemmaA-Inv}, we have 
			$r_0R=ann(R/r'R)$ for a left irreducible factor $r'$ of $\s^{\textbf{i}-\textbf{i}_l}$ $(a_{\textbf{i}_l})$. We have
			\[
				r_0R=\s^{-\textbf{i}+\textbf{i}_l}(r_0)R=ann(R/ \s^{-\textbf{i}+\textbf{i}_l}(r')R),
			\]  
			that is, $r_0$ is the right bound of the irreducible left factor $\s^{-\textbf{i}+\textbf{i}_l}$ $(r')$ of $a_{\textbf{i}_l}$. This contradicts the choice of $r_0$. Therefore,  $(I_{\textbf{i}}:r_0) \subseteq (r_0I_{\textbf{i}}:r_0)$ for all $\textbf{i}$. 	Since \(R\s^{\textbf{i}}(r_0) = Rr_0\), we see that 
			$$(I_{\textbf{i}}:\s^{\textbf{i}}(r_0)) \subseteq (r_0I_{\textbf{i}}:\s^{\textbf{i}}(r_0))$$ 
			for all $\textbf{i}$. This completes the proof. The second statement follows from Proposition \ref{prop: ASextensions}. 
		\end{proof}

	\section{Examples of Amitsur--Small Extensions}\label{sec: examples}
		This section presents some examples of Amitsur--Small extensions and establishes a skew Amitsur--Small Theorem. 
		We begin with the one-variable case.  
	\begin{proposition}\label{prop: AS_R[x,s,d]}
		Let $R$ be a PID, $\s:R\to R$ be an automorphism, and $\delta:R\to R$ be a $\s$-derivation. If $R$ has infinitely many distinct maximal ideals invariant under $\s$ and $\delta$, then the Ore extension $R[x;\s,\delta]$ is an Amitsur--Small extension of \(R\), and consequently, for every maximal left ideal \(M\) of $R[x;\s,\delta]$, we have \(M\cap R\neq 0\). 
	\end{proposition}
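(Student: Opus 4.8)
The plan is to deduce this from Theorem~\ref{thm: A-Inv} applied in the case $n=1$. First I would record that $S=R[x;\s,\delta]$ really is a skew polynomial ring over $R$ in the single variable $x$ in the sense of Definition~\ref{def: skew-several}: this is exactly Example~\ref{exam: iterated-skew} with $n=1$, where one uses that $\s$ is injective so that $\operatorname{le}(fg)=\operatorname{le}(f)+\operatorname{le}(g)$ holds for the unique term order $1<2<3<\cdots$ on $\mathbb{N}$. Next I would identify the structural endomorphism: since $xr=\s(r)x+\delta(r)$, we have $\operatorname{lt}(xr)=\s(r)x$, so by Proposition~\ref{prop: simple-proerties-skew} the structural endomorphism of $S$ is $\s$, which is an automorphism by hypothesis. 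Thus two of the three hypotheses of Theorem~\ref{thm: A-Inv} are already in place, and the whole statement will follow once we exhibit infinitely many distinct maximal ideals $Rr=rR$ of $R$ with $rS\subseteq Sr$: Theorem~\ref{thm: A-Inv} then gives that $S$ is an Amitsur--Small extension of $R$, and Proposition~\ref{prop: ASextensions} gives $M\cap R\neq 0$ for every maximal left ideal $M$ of $S$ (in fact Theorem~\ref{thm: A-Inv} gives that $M\cap R$ is maximal).

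To produce these elements I would start from the maximal ideals of $R$ invariant under $\s$ and $\delta$, of which there are infinitely many by hypothesis. Each such ideal is a two-sided ideal of the PID $R$ and hence is generated by an invariant element, i.e.\ has the form $\mathfrak m=Rr=rR$ (see \cite{Freeidealrings}); invariance under $\s$ means $\s(\mathfrak m)=\mathfrak m$, and invariance under $\delta$ means $\delta(\mathfrak m)\subseteq\mathfrak m$. It then remains to check that such an $r$ satisfies $rS\subseteq Sr$. For this I would show that the left ideal $Sr$ is in fact two-sided; then, since $r=1\cdot r\in Sr$, we get $rS\subseteq (Sr)S\subseteq Sr$. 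The left ideal $Sr$ is closed under right multiplication by $R$, because for $a\in R$ we have $ra\in rR=Rr$, so $ra=br$ for some $b\in R$ and $(gr)a=g(ra)=(gb)r\in Sr$. Hence the only thing left to verify is that $Sr\cdot x\subseteq Sr$, and for this it suffices to prove $rx\in Sr$.

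The crux is therefore to find $g=cx+d\in S$ with $gr=rx$. Expanding, $gr=c\s(r)x+\bigl(c\delta(r)+dr\bigr)$, so I need $c\s(r)=r$ and $dr=-c\delta(r)$ to be solvable in $R$. The first equation is solvable because $R\s(r)=\s(Rr)=\s(\mathfrak m)=\mathfrak m=Rr$, so $r\in R\s(r)$. For the second, $\delta(r)\in\delta(\mathfrak m)\subseteq\mathfrak m$, and since $\mathfrak m$ is a two-sided ideal of $R$ we get $c\delta(r)\in c\mathfrak m\subseteq\mathfrak m=Rr$, so a suitable $d$ exists. This computation, where both the automorphism property of $\s$ and the joint $\s$- and $\delta$-invariance of $\mathfrak m$ are genuinely used, is the only real content of the argument and the step I expect to be the main obstacle; everything else is bookkeeping. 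Having established $rx\in Sr$ hence $rS\subseteq Sr$ for infinitely many $r$, I would conclude by invoking Theorem~\ref{thm: A-Inv} and Proposition~\ref{prop: ASextensions}.
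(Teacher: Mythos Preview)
Your proposal is correct and follows the same approach as the paper: both reduce immediately to Theorem~\ref{thm: A-Inv} by verifying that each $\s$- and $\delta$-invariant maximal ideal $\mathfrak m=Rr=rR$ satisfies $rS\subseteq Sr$. The paper merely asserts that this implication is ``immediate,'' whereas you spell out the computation showing $rx\in Sr$ (solving $c\s(r)=r$ via $\s(\mathfrak m)=\mathfrak m$ and $dr=-c\delta(r)$ via $\delta(\mathfrak m)\subseteq\mathfrak m$); this is exactly the content the paper leaves to the reader.
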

	\begin{proof}
		By Theorem \ref{thm: A-Inv}, we only need to show that \(R\) has infinitely many maximal ideals $Rr=rR$ such that $rR[x;\s,\delta]\subseteq R[x;\s,\delta]r$. This follows immediately from the assumption that \(\s(Rr)\subseteq Rr\) and \(\delta(Rr)\subseteq Rr\) for  infinitely many distinct maximal ideals \(Rr\) in \(R\).  
	\end{proof}
	
	\begin{remark}
		In the case where $R$ is commutative, the second conclusion of the proposition holds under the weaker assumption that $R$ is a Dedekind domain, as proved by Bavula \cite[Theorem 1.2]{Bavula}.
	\end{remark}
	
	Turning to the several-variable case, we begin with the following result. 
	\begin{proposition}\label{prop: lemmaC}
		Let $R$ be a PID with infinitely many distinct maximal ideals. Then the polynomial ring $R[x_1,\dots,x_n]$ in central indeterminates is an Amitsur--Small extension of \(R\), and consequently, for every maximal left ideal $M$ of $R[x_1,\dots,x_n]$, we have $M\cap R\neq 0$. 
	\end{proposition}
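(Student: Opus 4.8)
The plan is to derive the statement from Theorem~\ref{thm: A-Inv}, so the task reduces to verifying its three hypotheses for $S = R[x_1,\dots,x_n]$. Two of them come for free. First, $S$ is a skew polynomial ring over $R$ in $x_1,\dots,x_n$: this is the case of Example~\ref{exam: iterated-skew} in which every $\sigma_i$ is $\operatorname{id}_R$ and every $\delta_i$ is $0$, so that $S = R[x_1,\dots,x_n;<_l]$ for the lexicographic term order. Second, since $x_i r = r x_i$ for every $r \in R$, the defining property of the structural endomorphisms in Proposition~\ref{prop: simple-proerties-skew} forces the endomorphism attached to $x_i$ to be $\operatorname{id}_R$, which is an automorphism. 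Hence it only remains to produce infinitely many pairwise distinct maximal ideals $Rr = rR$ of $R$ such that $rS \subseteq Sr$.

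The crux is the structural fact that every maximal two-sided ideal $M$ of a PID is generated by an invariant element, i.e.\ $M = Rr = rR$ for some $r$; I would prove it as follows. Since $R$ has infinitely many maximal ideals it is not a division ring, so $M \neq 0$; write $M = Rc$ as a left ideal and $M = dR$ as a right ideal, using that $R$ is a left and right PID. As $M$ is two-sided and contains $c$ and $d$, we have $c = de$ and $d = gc$ for some $e, g \in R$, hence $c = gce$. Since the length $\ell(a)$ of the left $R$-module $R/Ra$ is finite and additive on products in a PID (a standard consequence of the Jordan--H\"older theorem; see \cite{Freeidealrings}), the relation $\ell(c) = \ell(g) + \ell(c) + \ell(e)$ forces $e$ and $g$ to be units, whence $M = Rc = cR$. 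Now, as the $x_i$ are central, for such an invariant $r$ and any $p = \sum_{\mathbf i} a_{\mathbf i}\mathbf x^{\mathbf i} \in S$ one writes $r a_{\mathbf i} = b_{\mathbf i} r$ with $b_{\mathbf i} \in R$ (possible because $rR = Rr$) and obtains $rp = \sum_{\mathbf i} b_{\mathbf i}\mathbf x^{\mathbf i} r \in Sr$, so $rS \subseteq Sr$. Distinct maximal ideals of $R$ give pairwise distinct such $r$ up to associates, so the hypothesis on $R$ yields infinitely many admissible $r$, as needed.

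With all three hypotheses verified, Theorem~\ref{thm: A-Inv} shows that $S = R[x_1,\dots,x_n]$ is an Amitsur--Small extension of $R$ and that $M \cap R$ is a maximal left ideal of $R$ for every maximal left ideal $M$ of $S$; in particular $M \cap R \neq 0$, since $R$ is not a division ring. I expect the only genuinely nonroutine ingredient to be the invariance of maximal ideals of a PID; the rest is a direct unwinding of the definitions, relying on the centrality of the $x_i$ and on the identification of the structural endomorphisms with $\operatorname{id}_R$.
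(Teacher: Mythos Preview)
Your proof is correct and follows the same approach as the paper: both deduce the result directly from Theorem~\ref{thm: A-Inv}. The paper's proof is terse and leaves the verification of the hypotheses implicit, whereas you spell out in particular the standard fact that every nonzero two-sided ideal of a (noncommutative) PID is generated by an invariant element; this extra detail is welcome but does not constitute a different route.
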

	\begin{proof}
		For any term order \(<\), $R[x_1,\dots,x_n] = R[x_1,\dots,x_n;<]$ is a skew polynomial ring in \(x_1,\dots,x_n\) with respect to \(<\). The result now follows from Theorem \ref{thm: A-Inv}. 
	\end{proof}
	\begin{remark}
		The converse of the proposition holds true for commutative rings. More precisely, if $M\cap R\neq 0$ for every maximal ideal $M$ of the polynomial ring $R[x_1,\dots,x_n]$ over a commutative PID $R$ in central indeterminates,  then $R$ has infinitely many maximal ideals. One can prove this result using the language of G-rings. For more details, see \cite[Theorem 27 and Theorem 147]{kaplansky2006commutative}.
	\end{remark}

	To present a skew version of the Amitsur--Small Theorem, we need the following result. 
	\begin{proposition}\label{prop: center-maximal-ideals}
		Let $D$ be a division ring, $\s:D\to D$ an automorphism and $\delta:D\to D$  a $\s$-derivation. Then  the following statements are equivalent:\\
		(1) The center of the Ore extension $D[x;\s,\delta]$ contains a nonconstant polynomial.\\
		(2) $D[x;\s,\delta]$ has infinitely many distinct maximal ideals. 
	\end{proposition}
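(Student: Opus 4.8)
The statement is an equivalence, and I would prove the two implications in turn, reading ``maximal ideal'' as ``maximal two-sided ideal'' throughout, in keeping with the usage in Proposition~\ref{prop: AS_R[x,s,d]} and Theorem~\ref{thm: A-Inv}. Write $R=D[x;\s,\delta]$. Since $D$ is a division ring and $\s$ an automorphism, $R$ is a left and right principal ideal domain whose units are exactly the nonzero elements of $D$, and its degree function is additive. Put $C_0\colonequals Z(R)\cap D$. Then $C_0$ is a subfield of $D$ --- it is a commutative subring, and if $0\neq c\in C_0$ then $c^{-1}$ commutes with everything that $c$ does, so $c^{-1}\in C_0$ --- and a direct computation identifies $C_0$ with the ring of constants $\{c\in Z(D):\s(c)=c,\ \delta(c)=0\}$. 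In particular, ``$Z(R)$ contains a nonconstant polynomial'' is the same as ``$Z(R)\neq C_0$''.

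For $(1)\Rightarrow(2)$ I would argue directly. Fix $z\in Z(R)$ of degree $d\geq 1$. Comparing leading terms shows $z$ is transcendental over $D$: if $\sum_{j}c_jz^j=0$ with $m$ the largest index for which $c_m\neq 0$, then $c_mz^m$ has degree $md$, strictly larger than the degree of every other term, so the sum is nonzero. Hence the central subring $C_0[z]\subseteq Z(R)$ is a genuine polynomial ring over the field $C_0$, and therefore has infinitely many pairwise non-associate monic irreducibles $p$. For each such $p$, the central element $p(z)$ has degree $d\cdot\deg p\geq 1$ and is thus a non-unit, so $Rp(z)=p(z)R$ is a proper two-sided ideal; choose a maximal two-sided ideal $M_p\supseteq Rp(z)$. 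Then $M_p\cap C_0[z]$ is a proper ideal of $C_0[z]$ containing the maximal ideal $(p(z))$, hence equals it. Since non-associate irreducibles generate distinct ideals of $C_0[z]$, the ideals $M_p$ are pairwise distinct, so $R$ has infinitely many maximal ideals.

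For $(2)\Rightarrow(1)$ I would use the contrapositive: if $Z(R)=C_0$ is a field then $R$ is simple, so its only maximal ideal is $(0)$, and in particular $R$ does not have infinitely many maximal ideals. The simplicity of $R$ in this case is the classical simplicity criterion for skew polynomial rings over a division ring, whose substantive content is that \emph{every nonzero two-sided ideal $I$ of $D[x;\s,\delta]$ contains a nonzero central element}: granting this, such a central element lies in the field $C_0$ and is hence a unit, forcing $I=R$. For completeness I would include the standard proof of the italicized claim: take $f\in I$ monic of minimal degree $n$; minimality forces $f$ to be the unique monic element of degree $n$ in $I$, whence $fc=\s^{n}(c)f$ for all $c\in D$ and $fx-xf=af$ for some $a\in D$, so $fR=Rf$ with $fr=\tau(r)f$ for the ring automorphism $\tau$ of $R$ determined by $\tau|_D=\s^{n}$ and $\tau(x)=x+a$; one then produces a unit $u\in D$ and an integer $k\geq 1$ for which $uf^{k}$ is central.

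I expect the main obstacle to be exactly this last step --- producing $u$ and $k$, i.e.\ showing that a suitable power of the automorphism $\tau$ is inner --- which is genuinely delicate for general $\s$ and $\delta$. Rather than reprove it I would cite the simplicity criterion: it goes back to Amitsur and Jacobson for $\s=\mathrm{id}$ and is standard for arbitrary automorphisms $\s$ in the literature on Ore extensions over division rings. The remaining ingredients --- that $C_0$ is a field, that $z$ is transcendental over $D$, and the count of maximal ideals of $C_0[z]$ --- are all elementary.
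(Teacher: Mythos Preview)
Your argument for $(1)\Rightarrow(2)$ is correct and in fact a little more self-contained than the paper's: where the paper invokes Cauchon's description of the ideal lattice to assert directly that each $g(h_0(x))$ generates a \emph{maximal} two-sided ideal, you get by with Zorn's lemma together with the observation $M_p\cap C_0[z]=(p(z))$, which already suffices to separate the $M_p$.

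The implication $(2)\Rightarrow(1)$, however, has a genuine gap. The claim ``if $Z(R)=C_0$ then $R$ is simple'', equivalently ``every nonzero two-sided ideal of $D[x;\s,\delta]$ contains a nonzero central element'', is false for general $\s$. Take $D=k(t)$ with $k$ a field of characteristic zero, $\s(f(t))=f(t+1)$, and $\delta=0$. Then $\s$ has infinite order, a direct computation gives $Z(R)=k=C_0$, yet $xR=Rx$ is a nonzero proper two-sided ideal containing no nonzero central element, so $R$ is not simple. In your own sketch this is precisely the step you flagged as delicate: here $f=x$, the automorphism $\tau$ equals $\s$, and no power of $\s$ is inner on the commutative field $D$, so no $uf^k$ can be central. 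The simplicity criterion you invoke is valid for $\s=\mathrm{id}$ (differential operator rings) but does not carry over verbatim to arbitrary automorphisms; indeed $D[x;\s]$ with $\delta=0$ is \emph{never} simple, since $(x)$ is always two-sided. The paper sidesteps this by citing Cauchon's structure theorem: when the center contains no nonconstant polynomial, every nonzero two-sided ideal has the form $Rq(x)^n$ for a fixed invariant polynomial $q(x)$, so $R$ has \emph{at most one} maximal ideal --- not necessarily $(0)$, but that is already enough to contradict~(2).
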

	\begin{proof}
		(1)\(\implies\)(2): Let \(h_0(x)\) be a nonconstant polynomial of the least degree in the center of \(D[x;\s,\delta]\) . Let \(q(x)\) be a polynomial in $D[x;\s,\delta]$ of the least degree such that \(D[x;\s,\delta]q(x) = q(x) D[x;\s,\delta]\). By Cauchon's description of the ideal structure of \(D[x;\sigma,\delta]\) (see \cite{Cauchon}), every ideal of $D[x;\s,\delta]$ is generated by a nonconstant polynomial \(h(x)q(x)^n\), where  \(h(x)\in C(D)_{\sigma,\delta}[h_0(x)]\) and \(n\geq 0\). Here,   \(C(D)_{\sigma,\delta}\) is the field
		\[
			C(D)_{\sigma,\delta} = \{a\in D: \sigma(a) = a, \delta(a) =0, \forall b\in D\quad ab = ba\}.
		\]
		It follows that for every irreducible polynomial  \(g(t)\in C(D)_{\sigma,\delta}[t]\), the ideal of \(D[x;\sigma,\delta]\) generated by \(g(h_0(x))\) is maximal. It is well known that the polynomial ring \(F[t]\) over a commutative field \(F\) has infinitely many monic irreducible polynomials, from which (2) follows. \\
		(2)\(\implies\)(1): If the center of \(D[x;\sigma,\delta]\) does not contain a nonconstant polynomial, then by Cauchon's result, every ideal of $D[x;\s,\delta]$ is generated by a polynomial \(q(x)^n\) for some \(n\geq 1\), in which case	\(D[x;\sigma,\delta]\) would have a single maximal ideal. This proves the implication (2)\(\implies\)(1).  
	\end{proof}
	\begin{remark}
		It is easy to prove that if the center of \(D[x;\sigma,\delta]\) contains a nonconstant polynomial, then \(\sigma\) has a finite inner order and \(\delta\) is quasi-algebraic. For more details, see \cite{lam1invariant} and the references therein.   
	\end{remark}
	Let $D$ be a division ring. Let $\s:D\to D$ be an automorphism  and $\delta:D\to D$ be   a $\s$-derivation such that \(\sigma\delta = \delta\sigma\). Consider the ring \(D[x_1,\dots,x_n;\sigma,\delta]\), generated over \(D\) by \(x_1,\dots,x_n\), subject to the relations \(x_ix_j=x_jx_i\), \(x_ia=\s(a)x_i+\delta(a)\) for all \(i,j\) and \(a\in D\). It can be shown that  \(D[x_1,\dots,x_n;\sigma,\delta]\) is an iterated skew polynomial ring. More precisely, we have 
	\[
		D[x_1,\dots,x_n;\sigma,\delta] = D[x_1;\s_1;\delta_1][x_2;\s_2;\delta_2]\dots [x_n;\s_n;\delta_n],
	\] 
	where \(\s_i|_R =\s, \delta_i|_R = \delta,  \s_j(x_i) = x_i,  \delta_j(x_i) = 0\) for all \(1\leq i < j \leq n\). See also \cite[Theorem 4.2]{voskoglou1986extending}. 
	\begin{theorem}\label{thm: skew-AS}
		Let $D$ be a division ring. Let $\s:D\to D$ be an automorphism  and $\delta:D\to D$ be   a $\s$-derivation such that \(\sigma\delta = \delta\sigma\).   If the center of $D[x;\s,\delta]$ contains a nonconstant polynomial, then any simple  \(D[x_1,\dots,x_n;\sigma,\delta]\)-module is finite-dimensional as a vector space over $D$.  
	\end{theorem}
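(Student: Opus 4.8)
The plan is to mimic the original Amitsur--Small argument, reducing the several-variable statement to the already-established machinery of Amitsur--Small extensions. Let $S = D[x_1,\dots,x_n;\sigma,\delta]$ and let $M$ be a simple $S$-module, so $M \cong S/L$ for a maximal left ideal $L$ of $S$. Write $R = D[x_1;\sigma,\delta]$, the one-variable Ore extension inside $S$. The first step is to observe that $S$ is a skew polynomial ring over $R$ in the variables $x_2,\dots,x_n$ in the sense of Definition \ref{def: skew-several}: using the iterated description $S = R[x_2;\sigma_2,\delta_2]\cdots[x_n;\sigma_n,\delta_n]$ recorded just before the theorem, together with the fact that each $\sigma_i$ restricts to an automorphism and fixes the lower $x_j$, Example \ref{exam: iterated-skew} applies with the lexicographic order, giving $S = R[x_2,\dots,x_n;<_l]$ with structural endomorphisms that are automorphisms of $R$ (namely the extensions of $\sigma$ fixing $x_1$; in fact each structural endomorphism is the identity on $x_1$ and $\sigma$ on $D$, hence an automorphism of $R$ since $\sigma$ is).

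The second step is to verify that this extension $R \subset S$ is an Amitsur--Small extension, so that Proposition \ref{prop: ASextensions} yields $L \cap R \neq 0$. For this I would invoke Theorem \ref{thm: A-Inv}: $R = D[x_1;\sigma,\delta]$ is a PID (division ring coefficients, single Ore variable), its structural endomorphisms in $S$ are automorphisms, and the key hypothesis to check is that $R$ has infinitely many distinct maximal ideals $Rr = rR$ with $rS \subseteq Sr$. Here is where Proposition \ref{prop: center-maximal-ideals} enters: since the center of $D[x;\sigma,\delta]$ contains a nonconstant polynomial, $R$ has infinitely many maximal ideals, and by the explicit (Cauchon) description these are generated by $g(h_0(x_1))$ for irreducible $g \in C(D)_{\sigma,\delta}[t]$ — in particular by central elements of $R$, which automatically satisfy $rS \subseteq Sr = Sr$ since central elements of $R$ remain central in $S$ (using $\sigma_i(h_0(x_1)) = h_0(x_1)$ and $\delta_i(h_0(x_1)) = 0$, which follow from $\sigma_i|_D = \sigma$, $\delta_i|_D = \delta$, $\sigma\delta = \delta\sigma$, and $\sigma_i(x_1) = x_1$, $\delta_i(x_1) = 0$). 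Thus Theorem \ref{thm: A-Inv} applies and $L \cap R \neq 0$.

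The third step converts $L \cap R \neq 0$ into the finiteness conclusion. Pick $0 \neq p(x_1) \in L \cap R$. Then $M = S/L$ is a module over $S/Sp(x_1)$; more usefully, $M$ is a simple module over $S$ annihilated by the two-sided ideal generated by the central-up-to-Cauchon element — but cleanest is to argue as in Amitsur--Small: iterate. Having found $0 \neq p_1 \in L \cap D[x_1;\sigma,\delta]$, repeat the whole argument with the roles permuted to find, for each $k$, a nonzero $p_k \in L \cap D[x_k;\sigma,\delta]$ (the setup is symmetric in $x_1,\dots,x_n$). Now $D[x_k;\sigma,\delta]/D[x_k;\sigma,\delta]p_k$ is finite-dimensional over $D$ — here one uses that $p_k$ has positive degree in $x_k$ and $D$ is a division ring, so the quotient has dimension $\deg p_k < \infty$ as a left $D$-vector space. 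Then $M = S/L$ is a quotient of $S/\sum_k S p_k \cong \bigotimes_D$ (suitably interpreted) $\prod_k D[x_k;\sigma,\delta]/(p_k)$, which is finite-dimensional over $D$ because the $x_k$ commute with each other and the monomials $x_1^{a_1}\cdots x_n^{a_n}$ with $a_k < \deg p_k$ span $S/\sum_k Sp_k$ on the left over $D$. Hence $\dim_D M < \infty$.

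The main obstacle I anticipate is the bookkeeping in the third step: ensuring that $S/\sum_k Sp_k$ is genuinely finite-dimensional over $D$ on the left despite the skewing, i.e., that the finitely many monomials $x_1^{a_1}\cdots x_n^{a_n}$ with $0 \le a_k < \deg p_k$ really do left-span the quotient. This requires a reduction argument: given any monomial, use the relation $p_k \equiv 0$ to lower the $x_k$-degree below $\deg p_k$, but one must check that lowering the $x_k$-degree does not raise the $x_j$-degree for $j \ne k$ — this is exactly where $x_jx_k = x_kx_j$ and the fact that $p_k \in D[x_k;\sigma,\delta]$ involves only $x_k$ (no other variables) is used, so that rewriting modulo $Sp_k$ only affects the $x_k$-slot and the $D$-coefficients. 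A secondary subtlety is making sure the symmetry claim "the same argument finds $p_k \in L \cap D[x_k;\sigma,\delta]$ for each $k$" is legitimate, which it is because the defining relations of $S$ are symmetric under permuting the $x_i$, so $S$ is simultaneously a skew polynomial ring over $D[x_k;\sigma,\delta]$ in the remaining variables for every $k$, and Proposition \ref{prop: center-maximal-ideals} applied to $D[x;\sigma,\delta]$ (which does not depend on $k$) supplies the needed infinitely many central maximal ideals in each case.
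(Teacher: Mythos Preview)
Your proposal is correct and follows essentially the same route as the paper: view $S$ as a skew polynomial ring over $R=D[x_1;\sigma,\delta]$ via Example~\ref{exam: iterated-skew}, use Proposition~\ref{prop: center-maximal-ideals} to supply infinitely many maximal ideals of $R$ generated by central polynomials so that Theorem~\ref{thm: A-Inv} applies, then invoke symmetry in the $x_k$ to get $L\cap D[x_k;\sigma,\delta]\neq 0$ for every $k$ and deduce finite-dimensionality. You actually spell out the last step (the monomial spanning argument for $S/\sum_k Sp_k$) more carefully than the paper, which simply says that interpreting the nonzero intersections module-theoretically yields the conclusion.
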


	\begin{proof}
		The discussion preceding the theorem shows that  \[S = D[x_1,\dots,x_n;\sigma,\delta] = D[x_1;\sigma,\delta][x_2,\dots,x_n; <_l] \]	
		is a skew polynomial ring over \(D[x_1,\sigma, \delta]\) in variables \(x_2,\dots,x_n\), where \(<_l\) denotes  the lexicographical order  with $x_2<...<x_n$ (see Example \ref{exam: iterated-skew}). By Proposition \ref{prop: center-maximal-ideals}, \(D[x_1;\sigma,\delta]\) has infinitely many maximal ideals. As shown in the proof fo the proposition, every maximal ideal of \(D[x_1;\sigma,\delta]\) is generated by a central polynomial. Therefore, there are infinitely many maximal ideals \(D[x_1;\sigma,\delta]f\) in \(D[x_1,\sigma, \delta]\) such that \(fS\subset Sf\). It follows from Theorem \ref{thm: A-Inv} that \(S\) is an Amitsur--Small extension of \(D[x_1;\sigma,\delta]\). In particular, for every maximal left ideal \(M\) in \(S\), we have \(M\cap D[x_1;\sigma,\delta]\neq 0\). By symmetry, for every maximal left ideal \(M\) in \(S\), we have \(M\cap D[x_i;\sigma,\delta]\neq 0\) for all \(i\). Interpreting this result in the context of \(D[x_1,\dots,x_n;\sigma,\delta]\)-modules yields the desired conclusion. 
	\end{proof}

	As a special case, we record the following result.
	\begin{corollary}
		Let $F$ be a finite field and $\s:F\to F$ be an automorphism. Then  every simple \(F[x_1,\dots,x_n;\sigma]\)-module is finite-dimensional as a vector space over \(F\).  
	\end{corollary}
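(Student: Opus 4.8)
The plan is to deduce the corollary directly from Theorem~\ref{thm: skew-AS} by checking that the hypotheses are satisfied. First I would set $\delta = 0$, so that $F[x_1,\dots,x_n;\sigma]$ is the special case $F[x_1,\dots,x_n;\sigma,\delta]$ of the ring appearing in the theorem; here $\sigma$ is an automorphism of the finite field $F$ and the compatibility condition $\sigma\delta = \delta\sigma$ holds trivially. Thus it remains only to verify that the center of the one-variable Ore extension $F[x;\sigma]$ contains a nonconstant polynomial.

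To see this, I would use that $\operatorname{Aut}(F)$ is finite (indeed cyclic, generated by the Frobenius, when $F$ has prime subfield $\mathbb{F}_p$), so $\sigma$ has finite order, say $\sigma^m = \mathrm{id}$ for some $m\geq 1$. Then the element $x^m$ is central in $F[x;\sigma]$: it commutes with every $a\in F$ because $x^m a = \sigma^m(a) x^m = a x^m$, and it obviously commutes with $x$. Hence $x^m$ is a nonconstant polynomial in the center of $F[x;\sigma]$, so the hypothesis of Theorem~\ref{thm: skew-AS} is met, and the conclusion that every simple $F[x_1,\dots,x_n;\sigma]$-module is finite-dimensional over $F$ follows immediately.

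There is essentially no obstacle here; the only point worth a sentence is the finiteness of $\operatorname{Aut}(F)$ for a finite field, which is standard (Galois theory of finite fields). One could even avoid invoking Galois theory by noting that $F$ is finite, hence $\operatorname{Aut}(F)$ is a subgroup of the finite symmetric group on $F$, so $\sigma$ has finite order for that reason alone. Either way the argument is a short application of the main theorem, and I would present it in two or three lines as above.
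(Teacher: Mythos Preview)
Your argument is correct and matches the paper's own proof essentially line for line: set $\delta=0$, use finiteness of $F$ to get $\sigma^m=\mathrm{id}$ for some $m$, observe that $x^m$ is then central in $F[x;\sigma]$, and apply Theorem~\ref{thm: skew-AS}. The extra remarks about how to see $\operatorname{Aut}(F)$ is finite are fine but can be trimmed.
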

	\begin{proof}
		It is clear that \(\s\) has a finite order, say \(m\). Then \(x^m\) belongs to the center of \(F[x;\sigma]\). By Theorem \ref{thm: skew-AS}, every simple \(F[x_1,\dots,x_n;\sigma]\)-module is finite-dimensional as a vector space over \(F\).
	\end{proof}


\section*{Acknowledgments}
I would like to thank A.~Leroy~for proposing a sketch of the proof of Proposition \ref{prop: center-maximal-ideals}.

\bibliographystyle{plain}
\bibliography{SNbiblan}

 \end{document}